\documentclass{article} 
\usepackage{iclr2027_conference,times}

\usepackage{amsmath,amsfonts,bm}

\def\eqref#1{equation~\ref{#1}}
\def\Eqref#1{Eq.~(\ref{#1})}

\def\1{\bm{1}}

\DeclareMathAlphabet{\mathsfit}{\encodingdefault}{\sfdefault}{m}{sl}
\SetMathAlphabet{\mathsfit}{bold}{\encodingdefault}{\sfdefault}{bx}{n}

\usepackage{amsthm} 
\newtheorem{lemma}{Lemma}[section]
\newtheorem{proposition}{Proposition}[section]

\usepackage{booktabs,tabularx,placeins,float}
\usepackage{graphicx,subcaption}
\usepackage{microtype}
\usepackage{wrapfig}
\usepackage{needspace}
\newcommand{\finishwrap}{%
    \par
    \ifnum\value{WF@wrappedlines}>1
        \vspace{\dimexpr\baselineskip*(\value{WF@wrappedlines}-1)\relax}%
    \fi
    \WFclear
}
\usepackage{xcolor}
\usepackage{hyperref}
\hypersetup{hidelinks}
\usepackage{url}
\usepackage{algorithm,algpseudocode}
\DeclareMathOperator{\sig}{sig}
\DeclareMathOperator{\clip}{clip}

\title{Reinforcement Learning to Accelerate Primal-Dual Hybrid Gradient for Linear Programming}

\author{Jinhwan Sul \\
Georgia Institute of Technology, USA \\
\texttt{jsul7@gatech.edu} \\
\And
Alex Oshin \\
Georgia Institute of Technology, USA \\
\texttt{alexoshin@gatech.edu} \\
\AND
Evangelos A. Theodorou \\
Georgia Institute of Technology, USA \\
\texttt{evangelos.theodorou@gatech.edu}
}

\iclrfinalcopy
\begin{document}

\maketitle
\fancyhead{}
\renewcommand{\headrulewidth}{0pt}

\begin{abstract}
Primal--dual hybrid gradient (PDHG) methods solve large-scale linear programs (LPs) using GPU-friendly matrix--vector products and projections, but their practical performance depends on coordinating algorithm parameters, acceleration, and restarts. We introduce GALLOP, which uses reinforcement learning to jointly learn continuous algorithm parameters and discrete restart decisions without differentiating through the solver. Its generalized accelerated PDHG update combines separate primal and dual extrapolation, history corrections, and restart anchoring with independently adjustable coefficients. We train a dimension-agnostic feedback policy using a groupwise proximal policy optimization objective that clips likelihood ratios separately for different control groups and excludes inactive acceleration controls on restart transitions. We evaluate GALLOP on six LP families and a public item-placement benchmark. On the main evaluation settings across the six families, GALLOP reduces iteration counts by factors of $1.9$--$5.6$ and achieves up to a $16.0\times$ speedup in algorithm wall-clock time over MPAX. With one policy trained per family, the learned policies generalize without retraining to within-family LPs $3\times$--$400\times$ larger than the largest training instances, including Transport LPs with $10.24$ million variables.
\end{abstract}

\section{Introduction}
\label{sec:intro}

Linear programming (LP) is a fundamental problem class in constrained optimization~\citep{dantzig1963linear}.
The simplex and interior-point methods are widely used to solve LPs~\citep{dantzig1963linear,wright1997primal}.
Recently, due to the scalability limitations of these traditional methods~\citep{applegate2026pdlp},
primal--dual hybrid gradient (PDHG) method~\citep{chambolle2011first} based LP algorithms have been explored for large-scale problems~\citep{applegate2021practical,lu2023cupdlp,lu2025cupdlpx}.
PDHG iteration consists of matrix--vector products and projections which are inexpensive and GPU-friendly.
However, PDHG can require a large number of iterations to attain high-accuracy solutions~\citep{applegate2023faster}.

PDHG-based LP solvers~\citep{lu2025cupdlpx,zhou2025anderson} improve their practical convergence through heuristic adjustment of algorithmic parameters, such as the step size and primal weight, and acceleration techniques, such as restarted Halpern acceleration~\citep{halpern1967fixed,lu2024halpern} and Anderson acceleration~\citep{anderson1965iterative}.
Many of these heuristics and acceleration schemes are motivated by theoretical convergence analyses~\citep{lu2024halpern,applegate2023faster,zhou2025anderson}.
However, existing global convergence guarantees do not cover the full combination of heuristic adaptations used in practical solvers, whose effectiveness has been demonstrated empirically~\citep{lu2023cupdlpc,lu2025cupdlpx}.
Beyond convergence guarantees, a separate design challenge is to determine how these adaptations should be selected and coordinated to reduce solution time for a given family of LPs.
This leaves an opportunity to further accelerate PDHG-based LP solvers by learning algorithm parameters and heuristic rules from LP data.

Learning to optimize (L2O) uses problem data to accelerate optimization~\citep{Gregor2010Learning}.
We focus on L2O methods that use a learned policy throughout the solve until convergence, without handing the remaining iterations to an external solver.
\citet{venkataraman2021neural} and \citet{Martin2026Learning} use algorithm unrolling for fixed-point optimization.
However, their neural network designs are tied to the problem dimension and therefore do not support size generalization.
PDHG-Net~\citep{Li2024PDHGUnrolled} unrolls the basic PDHG iteration without incorporating discrete restart decisions or the additional acceleration schemes used by practical PDHG-based solvers.
It does not run the learned updates until convergence, but instead uses the network output to warm-start PDLP.

RLQP~\citep{Ichnowski2021Accelerating} uses reinforcement learning (RL) to learn the penalty parameter in OSQP~\citep{stellato2020osqp}.
While effective, RLQP applies model-free RL to tune continuous parameters of an iterative algorithm whose updates can be differentiated. 
In this setting, deep unfolding enables gradients of the optimization objective to be propagated directly through the solver, avoiding the additional value-function estimation and exploration required by RL. 
For example, DeepQP~\citep{saravanos2025deep} unfolds OSQP and learns its algorithm parameters end-to-end, providing a more direct gradient-based alternative.

\begingroup
\setlength{\intextsep}{2pt}
\begin{wrapfigure}{R}{0.65\linewidth}
    \begin{minipage}[t]{\linewidth}
        \centering
        \includegraphics[width=\linewidth]{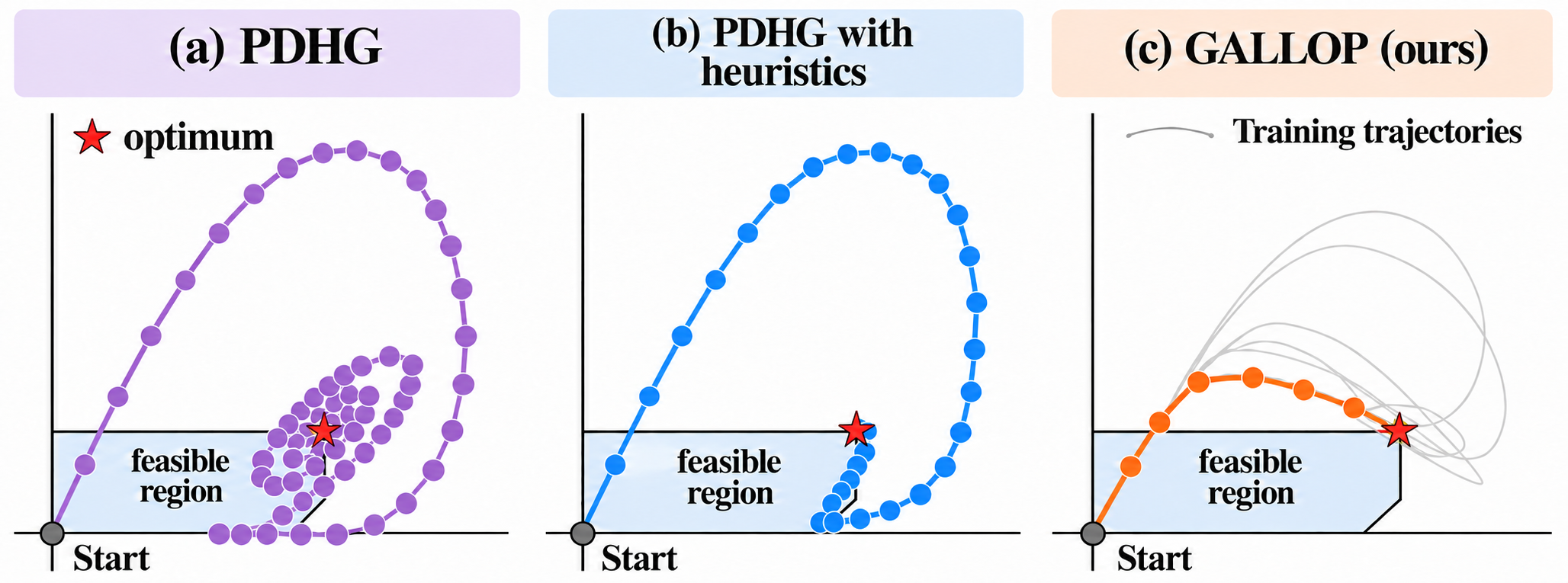}
        \captionsetup{type=figure,font=small,justification=raggedright,singlelinecheck=false,hypcap=false,skip=0pt}
        \caption{Conceptual comparison of (a) vanilla PDHG, (b) PDHG with heuristic acceleration and restarts, and (c) GALLOP with learned solver parameters and restarts.}
        \label{fig:concept}
    \end{minipage}
\end{wrapfigure}
\noindent
To solve large LPs faster, we learn a policy that jointly controls continuous algorithm parameters and discrete restart decisions in a PDHG-based LP algorithm.
We use RL to learn these controls jointly because ordinary backpropagation through the solver does not directly provide gradients for hard restart decisions.
The policy uses a fixed number of solver-state features and outputs a fixed number of controls, independent of the problem size.
This dimension-agnostic design allows us to train on smaller LPs and apply the learned policy to larger LPs without retraining.
\par\endgroup

We augment PDHG with a generalized acceleration step and a restart mechanism, and refer to the resulting update scheme as \textbf{generalized accelerated PDHG (GA-PDHG)}.
This step combines separate primal and dual extrapolation along the current PDHG update, history corrections from recent iterate differences, and a correction toward the restart anchor.
We learn the coefficients of these terms together with the step size, primal weight, and restart decision.
These controls provide a broad action space for RL to explore combinations of acceleration and restarting.
We call our proposed algorithm \textbf{GALLOP}, short for \textbf{G}eneralized \textbf{A}cceleration and Reinforcement \textbf{L}earning for \textbf{L}inear \textbf{O}ptimization with \textbf{P}DHG.

In experiments on six LP families, we demonstrate GALLOP is $1.9\times$--$5.6\times$ faster in terms of number of iterations relative to MPAX's restarted reflected Halpern solver~\citep{lu2024mpax}.
Algorithm wall-clock speedups reach $16.0\times$ on Random LPs with $300{,}000$ variables, $10.1\times$ on Auction LPs with $358{,}400$ variables, and $3.7\times$ on Transport LPs with $10.24$ million variables.
Our trained policies generalize to problems $3\times$--$400\times$ larger than the largest training size in a zero shot setting, using the family-specific size measures.
In particular, the Transport policy trained on LPs with at most $25{,}600$ variables, scales to LPs with $10.24$ million variables, a $400\times$ size generalization without retraining.
Ablations on Transport show that all five learned control groups contribute to reducing iteration counts, including the generalized-acceleration coefficients.
Our main contributions are as follows.
\begin{itemize}
    \setlength{\itemsep}{3pt}
    \setlength{\parsep}{0pt}
    \item We propose \textbf{GA-PDHG}, a generalized accelerated PDHG update scheme that combines independently controlled primal and dual extrapolation, history corrections, and a restart-anchor correction. This parameterization encompasses established acceleration schemes and their combinations, providing a broad search space for RL to discover acceleration strategies.
    \item We introduce \textbf{GALLOP}, an RL-based L2O framework for large-scale LPs. To accommodate the conditional discrete-continuous action space of GA-PDHG, we develop a groupwise PPO objective that clips likelihood ratios separately for each control group and masks inactive acceleration controls on restart transitions.
    \item We demonstrate acceleration and size generalization across six LP families, with iteration-count reductions by factors of $1.9$--$5.6$ and up to a $16.0\times$ speedup in algorithm wall-clock time over MPAX. The learned policies generalize to LPs $3\times$--$400\times$ larger than the largest training instances under family-specific size measures, including Transport LPs with $10.24$ million variables, without retraining.
\end{itemize}

\section{Primal--Dual Hybrid Gradient for Large-Scale Linear Programming}
\label{sec:pdhg}

PDLP~\citep{applegate2021practical} is a first-order LP solver based on the primal--dual hybrid gradient (PDHG) method~\citep{chambolle2011first}.
Its updates use matrix--vector products and projections, avoiding matrix factorization.
Basic PDHG can require many iterations to reach high accuracy.
When PDHG is applied to LP, acceleration techniques and heuristics are introduced to reduce the number of iterations.
This section presents the LP formulation and PDHG updates and reviews these enhancements in different PDLP variants.

\subsection{Problem Formulation: Linear Programming}

We consider the LP
\begin{equation}
\label{eq:LP}
\min_{x\in X} \quad c^\top x \quad \text{subject to} \quad Px = q, \quad Gx \ge h, \quad X = \{x : l \leq x \leq u\},
\end{equation}
where \(x \in \mathbb{R}^n\) is the primal variable and \(c \in \mathbb{R}^n\) is the objective coefficient vector. The LP has \(m_e\) equality constraints specified by \(P \in \mathbb{R}^{m_e \times n}\) and \(q \in \mathbb{R}^{m_e}\), and \(m_i\) inequality constraints specified by \(G \in \mathbb{R}^{m_i \times n}\) and \(h \in \mathbb{R}^{m_i}\).
The primal variable $x$ is bounded with $X$ where
$l\in\left(\mathbb R\cup\{-\infty\}\right)^n$, $u\in\left(\mathbb R\cup\{+\infty\}\right)^n$, and $l\leq u$ componentwise.
Define \(A = [P^\top \; G^\top]^\top \in \mathbb{R}^{m \times n}\) and \(b = [q^\top \; h^\top]^\top \in \mathbb{R}^m\), where \(m = m_e + m_i\).
The corresponding dual vector is \(y = [\left(y^{\mathrm{eq}}\right)^\top \; \left(y^{\mathrm{in}}\right)^\top]^\top \in \mathbb{R}^m\), with equality multipliers \(y^{\mathrm{eq}} \in \mathbb{R}^{m_e}\) and inequality multipliers \(y^{\mathrm{in}} \in \mathbb{R}_+^{m_i}\).
Linear programs (LPs) are commonly solved with the simplex~\citep{dantzig1963linear} or interior-point methods~\citep{wright1997primal}.
Standard implementations of these methods include matrix factorizations, whose computational and memory costs can become prohibitive at large scales.

\subsection{Primal--Dual Hybrid Gradient for Linear Programming}
\label{sec:pdlp-acceleration}

PDHG uses proximal gradients of the Lagrangian to update the primal and dual variables without matrix factorization. To derive these first-order updates, we express \Eqref{eq:LP} as the equivalent convex--concave min-max problem
\begin{equation}
\label{eq:saddle}
    \min_{x\in X}\ \max_{y\in Y}\quad L(x,y) := c^\top x + \left(y^{\mathrm{eq}}\right)^\top \left(q - Px\right) + \left(y^{\mathrm{in}}\right)^\top \left(h - Gx\right) = c^\top x - y^\top Ax + b^\top y,
\end{equation}
where $Y := \mathbb{R}^{m_e}\times\mathbb{R}_+^{m_i}$ leaves the equality multipliers unrestricted and the inequality multipliers nonnegative. For fixed $y\in Y$, minimizing the Lagrangian over $X$ gives the dual objective
\begin{equation}
\label{eq:lp-dual-objective}
    \inf_{x\in X}L\left(x,y\right)
    =b^\top y+\inf_{x\in X}\left(c-A^\top y\right)^\top x.
\end{equation}
For the dual objective to be finite, the reduced cost $c-A^\top y$ must satisfy the sign conditions imposed by $X$.
We denote the set of such reduced costs by $\Lambda$.

Throughout the paper, $z:=(x^\top,y^\top)^\top\in\mathbb R^{n+m}$ denotes a stacked primal--dual vector. Subscripts and accents on $z$ apply to both blocks $x$ and $y$. At iteration $t$, $\hat z_t$ is the PDHG input and $z_t\in X\times Y$ is its output.
At iteration $t$, projected descent and ascent with primal extrapolation give
\begin{subequations}
\label{eq:pdhg_iteration}
\begin{align}
    x_t &= \operatorname{proj}_X \left[ \hat x_t - \frac{\eta}{\omega}\left( c- A^\top\hat y_t \right) \right],  \\
    y_t &= \operatorname{proj}_Y\left[\hat y_t + \omega \eta \left( b - A\left(2x_t - \hat x_t \right) \right) \right].
\end{align}
\end{subequations}
Here $\operatorname{proj}$ denotes Euclidean projection, $\eta > 0$ is the step size, and $\omega > 0$ is the primal weight, giving primal and dual step sizes $\eta/\omega$ and $\eta\omega$, respectively.
Writing $\mathcal H$ for this PDHG map, we have $z_t=\mathcal H\left(\hat z_t;\omega,\eta\right)$.
With iteration-dependent parameters $\eta_t,\omega_t>0$, $z_{t+1}=\mathcal H\left(\hat z_{t+1};\omega_t,\eta_t\right)$.
Thus the current pair $\left(\hat z_t,z_t\right)$ was computed with $\left(\omega_{t-1},\eta_{t-1}\right)$, with $\left(\omega_0,\eta_0\right)$ reserved for initialization.
For any $\omega,\eta>0$, $z$ is a fixed point of the PDHG map, i.e. $\mathcal H\left(z;\omega,\eta\right)=z$, if and only if $z$ satisfies the KKT conditions of \Eqref{eq:LP}.


\paragraph{PDHG enhancements for LP.}
PDHG-based LP solvers choose a constant step size or adaptively choose it through trial-update acceptance tests~\citep{chambolle2011first,applegate2021practical}.
The primal weight is fixed or adjusted at restarts or fixed intervals to balance primal--dual displacements, typically through exponential moving averaging (EMA) in log space or proportional--integral--derivative (PID) control~\citep{lu2025cupdlpx}.
Restarting~\citep{hinder2020generic} resets the averaging or acceleration state from a selected iterate after sufficient progress, a loss of progress following an initial decrease, or an excessively long epoch. The progress is measured by the normalized duality gap, KKT error, or fixed-point residual~\citep{applegate2023faster,lu2024halpern}.
Acceleration uses step-size-weighted averaging~\citep{applegate2021practical}, Halpern anchoring with a diminishing anchor weight and optional reflection, or Anderson history corrections computed by least-squares residual minimization~\citep{anderson1965iterative,zhou2025anderson}.
With suitable parameter choices and restart rules, restarted averaging and Halpern acceleration exploit LP sharpness to achieve accelerated linear convergence~\citep{applegate2023faster,lu2024halpern}.
See Appendix~\ref{app:pdlp-enhancements} for the details on these enhancements across representative solvers.

\section{Reinforcement Learning for PDHG with Acceleration}
\label{sec:method}

The enhancement rules reviewed in PDHG-based LP solvers are hand-designed feedback controls based on solver state.
Their criteria address individual decisions, such as balancing primal--dual displacements, testing restart thresholds, or minimizing a local residual model.
These heuristics motivate exploring whether a more expressive policy that coordinates these decisions using a broader set of solver-state features can further accelerate convergence.
We therefore learn such a policy from solver trajectories using a common reward that induces faster convergence.

\subsection{PDHG with Generalized Acceleration}
\label{sec:generalized-acceleration}

To enable a learned policy to explore both established acceleration rules and their combinations, we introduce a family of accelerated PDHG updates.
Specifically, we combine extrapolation, history corrections, and anchoring with independently adjustable coefficients to formulate \textbf{generalized accelerated PDHG (GA-PDHG)}.
For a fixed, finite memory length $k\geq0$, define the output-history matrix $\mathbf K_t:=\begin{bmatrix}z_t-z_{t-1}&\cdots&z_{t-k+1}-z_{t-k}\end{bmatrix}\in\mathbb R^{\left(n+m\right)\times k}$.
Unavailable differences and those discarded at a restart are replaced by zero.
The vector $z_\star$ is the fixed anchor within the current epoch, and $s$ counts the non-restart iterations within the current epoch.
The coefficients $\alpha_t^x,\alpha_t^y\geq-1$ control primal and dual relaxation or extrapolation, $\boldsymbol\beta_t\in\mathbb R^k$ controls the history correction, and $\zeta_t\in\mathbb R$ controls the anchor correction.
Let $I_d$ denote the $d$-dimensional identity matrix and $e_t\in\{0,1\}$ the restart decision.
When $e_t=0$ (no restart), the GA-PDHG update is
\begin{subequations}
\begin{align}
    \hat z_{t+1}
    &=z_t+
    \begin{pmatrix}
        \alpha_t^x I_n&0\\
        0&\alpha_t^y I_m
    \end{pmatrix}\left(z_t-\hat z_t\right)
    +\mathbf K_t\boldsymbol\beta_t +\zeta_t
    \left(z_\star-z_t\right),
    \label{eq:gen_acc_iter}\\
    z_{t+1}&=\mathcal H\left(\hat z_{t+1};\omega_t,\eta_t\right).
    \label{eq:base_PDHG}
\end{align}
\end{subequations}
When $e_t=1$ (restart), the restart branch sets $\hat z_{t+1}=z_t$.
Note that GA-PDHG always ends with a PDHG mapping and takes its output as the iterate, allowing the acceleration coefficients to explore combinations beyond convex combinations and generate proposals outside \(X\times Y\) while ensuring \(z_{t+1}\in X\times Y\).
After a restart, we set $z_\star\leftarrow z_{t+1}$, $\mathbf K_{t+1}\leftarrow0$, and $s\leftarrow0$.

\noindent
\begin{minipage}[t]{0.40\linewidth}
\vspace{0pt}
\raggedright
Table~\ref{tab:ga-pdhg-specializations} shows how coefficient choices in \Eqref{eq:gen_acc_iter} correspond to five acceleration and relaxation schemes for fixed-point problems.
GA-PDHG thus offers a unified search space that encompasses these schemes as special cases.
For general coefficient choices, Proposition~\ref{prop:ga-pdhg-optimality} gives sufficient conditions under which the output of a convergent GA-PDHG trajectory is primal--dual optimal in the limit.
Note that we do not claim global convergence for arbitrary learned control sequences.
See Appendix~\ref{app:proofs} for details.
\end{minipage}\hfill
\begin{minipage}[t]{0.57\linewidth}
\vspace{0pt}
\centering
\small
\captionsetup{type=table,font=small,position=top,justification=raggedright,singlelinecheck=false,hypcap=false,skip=5pt}
\caption{Acceleration and relaxation schemes for fixed-point problems via GA-PDHG coefficients.}
\label{tab:ga-pdhg-specializations}
\setlength{\tabcolsep}{1pt}
\renewcommand{\arraystretch}{1.0}
\begin{tabularx}{\linewidth}{@{}>{\raggedright\arraybackslash}Xccc@{}}
\toprule
Method & $\alpha_t^x\!=\!\alpha_t^y\!=\!\alpha_t$ & $\boldsymbol\beta_t$ & $\zeta_t$\\
\midrule
Halpern~\citep{halpern1967fixed} & $0$ & $\mathbf0$ & $\frac{1}{s+2}$\\
Reflected Halpern~\citep{lu2024halpern} & $\frac{s+1}{s+2}$ & $\mathbf0$ & $\frac{1}{s+2}$\\
Anderson~\citep{anderson1965iterative} & $0$ & $\boldsymbol\beta_t^{\mathrm{AA}}$ & $0$\\
\mbox{Krasnosel'ski\u\i--Mann (KM)} \mbox{\citep{krasnosel1955two,mann1953mean}} & $\lambda-1$ & $\mathbf0$ & $0$\\
Fast KM~\citep{bot2023fastkm} & $\frac{\nu}{2\left(s+\nu\right)}-1$ & $\frac{s}{s+\nu}$ & $0$\\
\bottomrule
\end{tabularx}
\par\vspace{4pt}
\footnotesize
\raggedright
$\boldsymbol\beta_t^{\mathrm{AA}}$ is obtained by solving a least squares problem. $\lambda\!\in\!(0,2)$. 
For Fast KM, $k=1$, $\nu>2$ and $s\geq1$.
\end{minipage}
\par

\begin{proposition}[Optimality of convergent GA-PDHG]
\label{prop:ga-pdhg-optimality}
Suppose $\boldsymbol\beta_t$ is bounded, $\eta_t,\omega_t$ are uniformly positive and bounded, and there exists $\delta>0$ such that $1+\alpha_t^x,1+\alpha_t^y\geq\delta$ whenever $e_t=0$.
If both $\{\hat z_t\}$ and $\{z_t\}$ converge to finite limits and $\left\lVert\zeta_t\left(z_\star-z_t\right)\right\rVert_2\longrightarrow0$ along the trajectory, then the limit of $\{z_t\}$ is a primal--dual optimal solution of \Eqref{eq:LP}.
\end{proposition}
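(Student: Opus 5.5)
Let $\hat z_t\to\hat z$ and $z_t\to\bar z$. The plan is to show $\hat z=\bar z$, and then pass to the limit in the PDHG map using the stated fixed-point characterization: $\mathcal H(z;\omega,\eta)=z$ if and only if $z$ is KKT.

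\textbf{Step 1 (consecutive differences vanish).} Since $z_t$ converges, $z_t-z_{t-1}\to0$. Every column of $\mathbf K_t$ is either such a difference or zero, and $k$ is fixed, so $\mathbf K_t\to0$. Boundedness of $\boldsymbol\beta_t$ then gives $\mathbf K_t\boldsymbol\beta_t\to0$.

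\textbf{Step 2 (identify $\hat z=\bar z$).} First suppose $e_t=1$ for infinitely many $t$. Along those indices $\hat z_{t+1}=z_t$, so $\hat z=\bar z$ directly. Otherwise there is a tail with $e_t=0$ throughout. There we rearrange \Eqref{eq:gen_acc_iter} as
\[
(I+D_t)(z_t-\hat z_t)=\hat z_{t+1}-\hat z_t-\mathbf K_t\boldsymbol\beta_t-\zeta_t(z_\star-z_t),
\]
where $D_t=\mathrm{diag}(\alpha_t^x I_n,\alpha_t^y I_m)$. On the right, $\hat z_{t+1}-\hat z_t\to0$ because $\hat z_t$ converges, the history term vanishes by Step 1, and the anchor term vanishes by hypothesis. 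The matrix $I+D_t$ is diagonal with entries at least $\delta>0$, so its inverse has norm at most $1/\delta$. Hence $z_t-\hat z_t\to0$, and again $\hat z=\bar z$.

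This block-diagonal inversion is the main obstacle, and it is exactly where the $\delta$ hypothesis is used. The hypothesis is only needed on $e_t=0$ steps, which is why the case split matters. If restarts and non-restarts both occur infinitely often, either branch alone already yields $\hat z=\bar z$, so the argument needs only one of the two index subsequences.

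\textbf{Step 3 (pass to the limit).} The parameters $(\omega_t,\eta_t)$ lie in a compact subset of $(0,\infty)^2$, so we extract a subsequence along which $(\omega_t,\eta_t)\to(\bar\omega,\bar\eta)$ with both limits positive. The map $\mathcal H(\hat z;\omega,\eta)$ is jointly continuous, since Euclidean projections onto the closed convex sets $X,Y$ are nonexpansive and the map is otherwise affine in $\hat z$ with coefficients continuous in $(\omega,\eta)$. Taking limits in $z_{t+1}=\mathcal H(\hat z_{t+1};\omega_t,\eta_t)$ gives $\bar z=\mathcal H(\bar z;\bar\omega,\bar\eta)$. By the fixed-point characterization stated after \Eqref{eq:pdhg_iteration}, $\bar z$ satisfies the KKT conditions. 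Hence $\bar z$ is primal--dual optimal for \Eqref{eq:LP}, and it lies in $X\times Y$ because that set is closed.
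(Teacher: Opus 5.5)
Your proof is correct and follows essentially the same route as the paper. The common steps are: the history term vanishes, rearranging the non-restart update with the $\delta$ lower bound gives $z_t-\hat z_t\to0$, and passing to the limit along a subsequence where $(\omega_t,\eta_t)$ converges, using joint continuity of $\mathcal H$, yields a fixed point and hence a KKT point. The only cosmetic difference is that the paper does not split into cases: on restart steps $z_t-\hat z_t=\hat z_{t+1}-\hat z_t$, so a single bound with constant $C_\delta=\max\{1,\delta^{-1}\}$ gives $\lVert z_t-\hat z_t\rVert_2\to0$ along the whole sequence rather than along a subsequence.
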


Selecting the coefficients of GA-PDHG can be viewed as an optimal control problem in which the GA-PDHG iteration defines a discrete-time dynamical system and the objective is to accelerate convergence.
Figure~\ref{fig:framework} illustrates a feedback control loop in which the solver inputs are selected based on the current solver state. At iteration $t$, the solver state $S_t$ contains the LP data, input--output pair $(z_t, \hat z_t)$, anchor $z_\star$, and history $\mathbf K_t$, the previous control variables, convergence records, and iteration counters. 
From this state, we construct an observation $o_t=\operatorname{Obs}\left(S_t\right)$ whose dimension is independent of the problem size. 
A policy network inputs $o_t$ and outputs an action $a_t$ comprising the restart decision $e_t$ and GA-PDHG coefficients $\alpha_t^x,\alpha_t^y,\boldsymbol\beta_t,\zeta_t$, and PDHG parameters $\omega_t,\eta_t$. 
Using this action, the solver performs the GA-PDHG update and updates its diagnostics and counters to obtain $S_{t+1}$. 
The next observation $o_{t+1}=\operatorname{Obs}\left(S_{t+1}\right)$ is then constructed and passed to the policy for the next decision.
\begin{figure}[!htbp]
    \centering
    \includegraphics[width=0.95\linewidth]{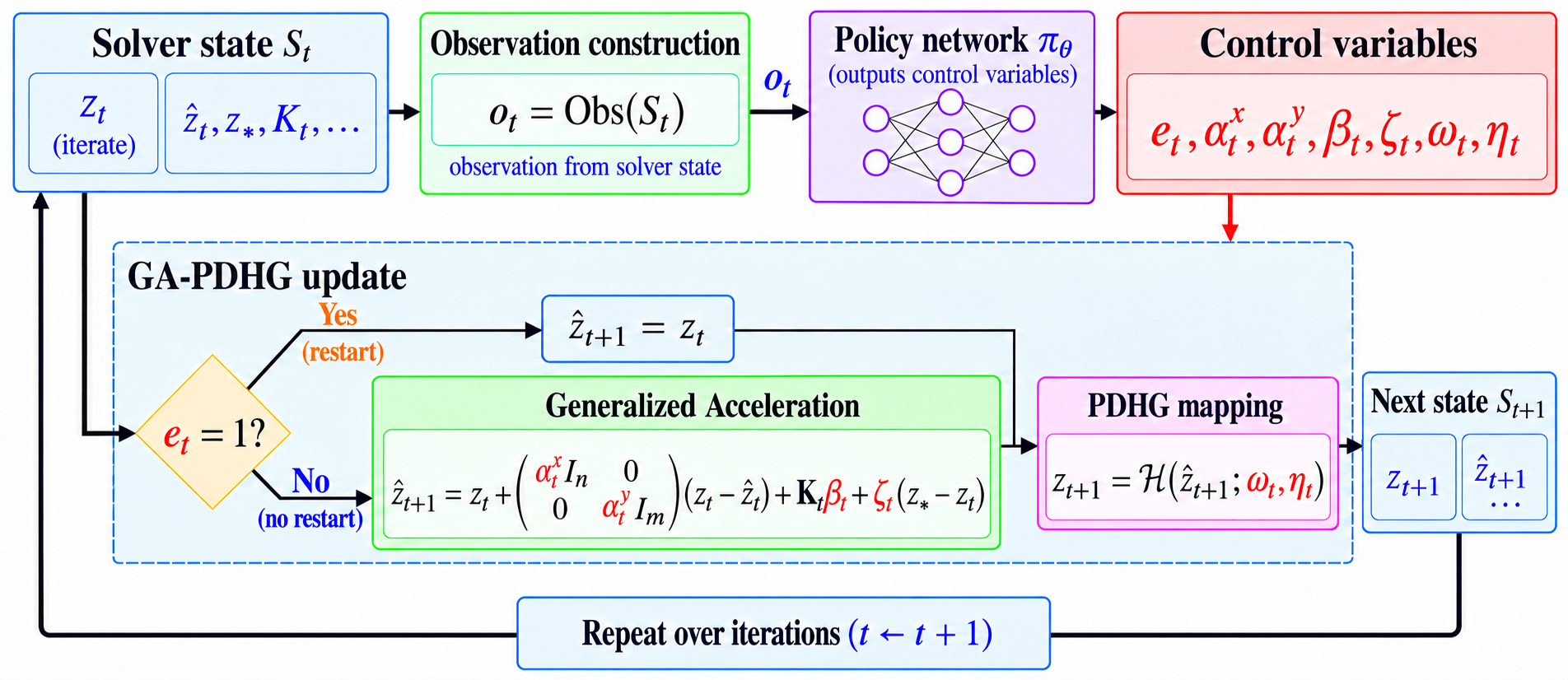}
    \caption{Feedback control loop for GA-PDHG. Red symbols denote control variables selected by the policy network from solver observations.}
    \label{fig:framework}
\end{figure}
\vspace{-20pt}
\subsection{Reinforcement Learning Formulation}
\label{sec:learning-controller}
We use reinforcement learning (RL) to train the feedback policy $\pi_\theta$ for GA-PDHG and refer to the resulting solver as \textbf{GALLOP}.
Each LP solve corresponds to an episode, during which the policy selects solver controls at each iteration to maximize the expected cumulative reward.
To support size generalization, we design the observation and control spaces to have dimensions independent of $n$ and $m$, allowing the same policy to be evaluated on LPs of different sizes.
At each decision step, the policy receives the observation $o_t = \operatorname{Obs}(S_t)$, which summarizes the solver state.
Log-normalized primal and dual residuals, the duality gap, their changes, and fixed-point residual progress describe current error and recent improvement.
Iteration count, restart age, progress from the anchor, primal--dual displacement balance, relative primal weight, and its one-step change characterize the current epoch.
A normalized history Gram matrix, inner products with the PDHG direction, and anchor magnitude and alignment describe the scales and relative directions of the available corrections.
Given this observation, the policy selects controls in a conditional hybrid action space, in which the discrete restart decision determines which continuous controls affect the solver transition.
The policies for $e_t$, $\eta_t$, and $\omega_t$ are trained using both restart and non-restart transitions, whereas the policies for the acceleration coefficients $\left(\alpha_t^x,\alpha_t^y,\boldsymbol\beta_t,\zeta_t\right)$ are trained only on non-restart transitions.

We measure the quality of iterate $z\in X\times Y$ in original LP coordinates using the primal residual, dual residual, and duality-gap measure.
Let $\bar c:=c-A^\top y$ be the reduced cost and $\lambda:=\operatorname{proj}_{\Lambda}\left(\bar c\right)$ its projection onto $\Lambda$.
The primal and dual residuals and the duality-gap measure are defined as
\begin{equation}
\label{eq:residuals}
  r_p\left(z\right) := \operatorname{proj}_Y\left(b-Ax\right),
  \quad r_d\left(z\right) := \bar c-\lambda,
  \quad r_g\left(z\right) := c^\top x-\left(b^\top y+\min_{x\in X}\lambda^\top x\right).
\end{equation}
We denote the logarithms of their normalized magnitudes by $p\left(z\right)$, $d\left(z\right)$, and $g\left(z\right)$ and define the merit function as
\begin{equation}
\label{eq:controller-merit}
    M^{\mathrm{rel}}\left(z\right):=\max\left\{p\left(z\right),d\left(z\right),g\left(z\right)\right\}.
\end{equation}
For the target tolerance $\epsilon_{\mathrm{tol}}$, an episode ends successfully when the convergence criterion $M^{\mathrm{rel}}\left(z_{t+1}\right)\leq\log\epsilon_{\mathrm{tol}}$ is met. If the episode does not converge, it is truncated after $T_{\max}$ controlled iterations.
With $[a]^+:=\max\{a,0\}$, we use the following reward function to encourage reaching tolerance in fewer iterations,
\begin{equation}
\label{eq:reward}
\begin{aligned}
    r_t\!=\!M^{\mathrm{rel}}\!\left(z_t\right)\!-\!M^{\mathrm{rel}}\!\left(z_{t+1}\right)
    \!-\!\chi\left[M^{\mathrm{rel}}\!\left(z_{t+1}\right)\!-\!\log\epsilon_{\mathrm{tol}}\right]^{\!+} \!\!\!-\!\frac{1}{T_{\max}}
    \!+\!\mathbf 1\!\left[M^{\mathrm{rel}}\!\left(z_{t+1}\right)\!\leq\!\log\epsilon_{\mathrm{tol}}\right]\!.
\end{aligned}
\end{equation}
Here, $\chi\geq0$ weights the excess-error penalty $\left[M^{\mathrm{rel}}\!\left(z_{t+1}\right)\!-\!\log\epsilon_{\mathrm{tol}}\right]^{\!+}$ and $\mathbf1[\cdot]$ is the indicator function.
The difference in merit $M^{\mathrm{rel}}\!\left(z_t\right)\!-\!M^{\mathrm{rel}}\!\left(z_{t+1}\right)$ rewards a reduction in KKT error from one iteration to the next, even when the error remains above the target tolerance.
The excess-error penalty encourages reducing errors earlier along the trajectory.
Summed over an episode without discounting, the last two terms contribute $1-T/T_{\max}$ for success after $T$ iterations and $-1$ for truncation without success, favoring successful solves that reach tolerance in fewer iterations.

\subsection{Policy Learning with Proximal Policy Optimization}
\label{sec:policy-training}

We design a groupwise proximal policy optimization (PPO) objective that reflects both conditional activation and the distinct control roles in GA-PDHG.
We assign actions to five groups, $\mathcal G=\{e,\beta,(\alpha,\zeta),\eta,\omega\}$, with a Bernoulli policy for restart and Gaussian policies for the continuous controls.
For trajectories collected under fixed policy parameters $\theta_{\mathrm{old}}$, let $a_t^j$ be the recorded action of group $j\in\mathcal G$, and let $\pi^j_{\theta}(a_t^j|o_t)$ denote its probability mass or density under policy parameters $\theta$.
On a transition where group $j$ is active, its likelihood ratio is $\rho_t^j :=
    \frac{\pi^j_{\theta}\;(a_t^j|o_t)}
    {\pi^j_{\theta_{\mathrm{old}}}(a_t^j|o_t)}$.
On restart transitions, we marginalize the unused acceleration controls in groups $\beta$ and $(\alpha,\zeta)$ out of the likelihood and set $\rho_t^j\equiv1$ for these inactive groups, so they contribute no policy gradient.
A shared state-value critic $V^{\phi}\left(o_t\right)$ provides value estimates for generalized advantage estimation (GAE)~\citep{schulman2016gae}.
Let $\tilde\Psi_t$ denote the resulting advantage normalized over the collected transitions.
All groups use this common advantage to learn from the return of their joint execution.
The objective of our groupwise PPO is
\begin{equation}
\label{eq:ppo-policy-objective}
    \mathcal J \left(\theta\right)
    =
    \sum_{j\in\mathcal G}
    \hat{\mathbb E}_t
    \left[
        \min\left(
            \rho_t^j\tilde\Psi_t,\,
            \clip\left(
                \rho_t^j,
                1-\varepsilon^{\mathrm{clip}},
                1+\varepsilon^{\mathrm{clip}}
            \right)\tilde\Psi_t
        \right)
    \right].
\end{equation}
Here, $\hat{\mathbb E}_t$ averages over the collected transitions. $\varepsilon^{\mathrm{clip}}>0$ is the clipping parameter, which is applied to each group's likelihood ratio $\rho_t^j$.
For the standard PPO~\citep{schulman2017ppo}, clipping is applied to the joint action likelihood ratio $\prod_{j\in\mathcal G}\rho_t^j$.
Under the joint clipping, changes in one group's likelihood can suppress the gradient for the other group's policy even when the the other group's likelihood ratio is close to one.
In particular, restart probability is often below $10\%$, hence a small logit change has little effect on the non-restart likelihood. On the other hand $\beta$ is sampled from $k$-dimensional Gaussian, which accumulates coordinate contributions in its log-likelihood ratio.
\begingroup
\setlength{\intextsep}{2pt}
\setlength{\columnsep}{10pt}
\Needspace{5\baselineskip}
\begin{wraptable}{r}{0.25\linewidth}
\begin{minipage}[!h]{\linewidth}
\centering
\small
\captionsetup{type=table,font=small,position=top,justification=centering,singlelinecheck=false,hypcap=false,skip=5pt}
\caption{Clipping ratio (\%).}
\label{tab:groupwise-clipping}
\setlength{\tabcolsep}{2pt}
\begin{tabular}{@{}l|c|c@{}}
\toprule
Problem class & $\beta$ & $e$\\
\midrule
Transport & 3.79 & 0.35\\
Auction & 8.99 & 0.94\\
Random LPs & 2.01 & 0.13\\
\bottomrule
\end{tabular}
\end{minipage}
\end{wraptable}
\noindent
Table~\ref{tab:groupwise-clipping} shows that clipping of $\beta$ happens more frequently than restart $e$, motivating separate clipping decisions.
Under our groupwise PPO objective in \Eqref{eq:ppo-policy-objective}, each group's clipping depends only on its own ratio, so a change in one group's ratio cannot by itself clip another group's ratio.
Similar discussions can be found in ~\citet{fan2019hybrid}. 
At inference, we fix $\theta$, threshold the restart probability at $0.5$, and transform the Gaussian means into continuous controls.
\textcolor{black}{GALLOP} then executes the feedback loop in Figure~\ref{fig:framework} until the KKT tolerance is reached or the iteration limit is exhausted.
The critic is used only during training. See Appendix~\ref{app:implementation-details} for details.
\finishwrap\endgroup
Off-policy actor--critic methods, such as SAC~\citep{haarnoja2018sac}, can also be extended to hybrid action spaces~\citep{delalleau2019hybrid}. However, these methods require the hybrid structure to be represented in both the policy and the action-value critic, including the treatment of inactive continuous controls.
Because environment interaction is inexpensive in our setting, the sample-efficiency gains from off-policy data reuse offer limited benefit.

\Needspace{18\baselineskip}
\section{Experiments}
\begingroup
\setlength{\parskip}{4pt}
\setlength{\intextsep}{8pt plus 2pt minus 2pt}
\setlength{\textfloatsep}{10pt plus 2pt minus 2pt}

\subsection{Experiment Setting}
\label{sec:experimental-setup}
\begingroup
\setlength{\intextsep}{4pt}
\setlength{\columnsep}{14pt}
\Needspace{14\baselineskip}
\leavevmode
\begin{wraptable}{r}{0.35\linewidth}
\begin{minipage}[t]{\linewidth}
\centering
\footnotesize
\captionsetup{type=table,font=small,justification=raggedright,singlelinecheck=false,hypcap=false,skip=5pt}
\setlength{\tabcolsep}{0pt}
\renewcommand{\arraystretch}{0.9}
\caption{Density $\operatorname{nnz}\left(A\right)/(mn)$ and $m/n$ at each family's largest training size.}
\label{tab:lp-family-comparison}
\begin{tabular*}{\linewidth}{@{\extracolsep{\fill}}lrr@{}}
\toprule
Family & Density & $m/n$\\
\midrule
Random LP & $1.28\%$ & $0.5$\\
Transport & $0.625\%$ & $0.0125$\\
Auction & $0.313\%$ & $0.25$\\
Min-cost flow & $0.0417\%$ & $0.167$\\
Robust inventory & $0.326\%$ & $0.341$\\
$L_\infty$ regression & $100\%$ & $26.7$\\
\bottomrule
\end{tabular*}
\end{minipage}
\end{wraptable}
\noindent
We evaluate our method on six different LP families in Table~\ref{tab:lp-family-comparison}; Random LP, Transport, Auction, Min-cost flow, Robust inventory management, and $L_\infty$ regression.
We train one policy per family and evaluate on unseen LPs with varying sizes within the same family.
Appendix~\ref{app:lp-problem-classes} gives the problem details and size lists.
We compare GALLOP with MPAX's $\mathrm{r}^2$HPDHG solver~\citep{lu2024mpax} and GA-PDHG (HT), a hand-tuned baseline that shares our solver implementation.
GA-PDHG (HT) uses the reflected Halpern coefficients in Table~\ref{tab:ga-pdhg-specializations}, and $\eta_t=0.998/\lVert A\rVert_2$, with restarting rules and PID controller weight updates from cuPDLPx~\citep{lu2025cupdlpx}.
Because MPAX checks convergence every 100 iterations by default whereas our implementation checks every iteration, we use saved trajectories to identify the first iterate satisfying $M^{\mathrm{rel}}\left(z_t\right)\leq\log\epsilon_{\mathrm{tol}}$, with $\epsilon_{\mathrm{tol}}=10^{-6}$.
We report iterations and matrix--vector products to this common tolerance, including products from MPAX's rejected adaptive steps.
Algorithm wall clock measures GPU execution of the main solve loop after compilation and excludes JIT compilation and preconditioning.

\finishwrap\endgroup

\subsection{Convergence and Size Generalization}
\label{sec:convergence-generalization}

Figure~\ref{fig:synthetic-residuals} plots the exponentiated merit \(\exp (M^{\mathrm{rel}}(z_t))\) against iteration count.
\textcolor{black}{GALLOP} reaches the tolerance $10^{-6}$ in fewer iterations than both baselines on all six plotted instances.
Table~\ref{tab:main-results} reports medians over four unseen LPs per family and size.

\begin{figure}[!ht]
    \centering
    \captionsetup{font=small}
    \captionsetup[subfigure]{skip=2pt}
    \includegraphics[width=0.85\linewidth]{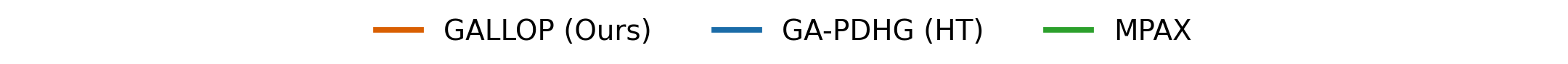}

    \vspace{1pt}
    \begin{subfigure}[t]{0.331\linewidth}
        \centering
        \includegraphics[width=\linewidth]{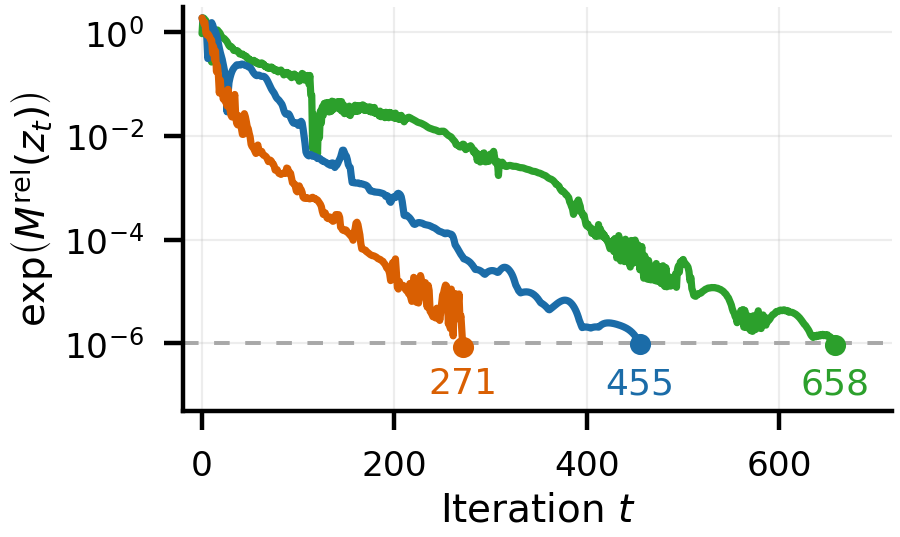}
        \caption{Random LP}
        \label{fig:synthetic-residual-random}
    \end{subfigure}\hfill
    \begin{subfigure}[t]{0.331\linewidth}
        \centering
        \includegraphics[width=\linewidth]{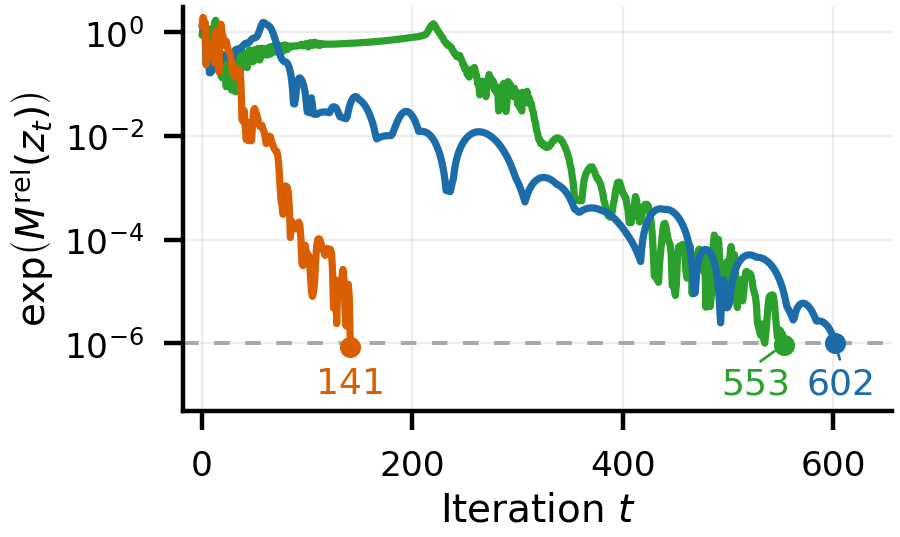}
        \caption{Transport}
        \label{fig:synthetic-residual-transport}
    \end{subfigure}\hfill
    \begin{subfigure}[t]{0.331\linewidth}
        \centering
        \includegraphics[width=\linewidth]{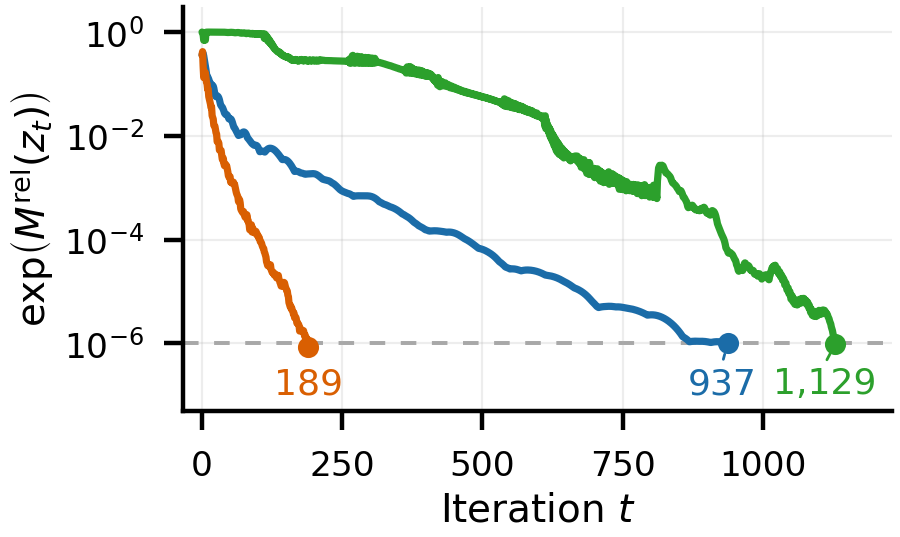}
        \caption{Auction}
        \label{fig:synthetic-residual-auction}
    \end{subfigure}

    \vspace{3pt}
    \begin{subfigure}[t]{0.331\linewidth}
        \centering
        \includegraphics[width=\linewidth]{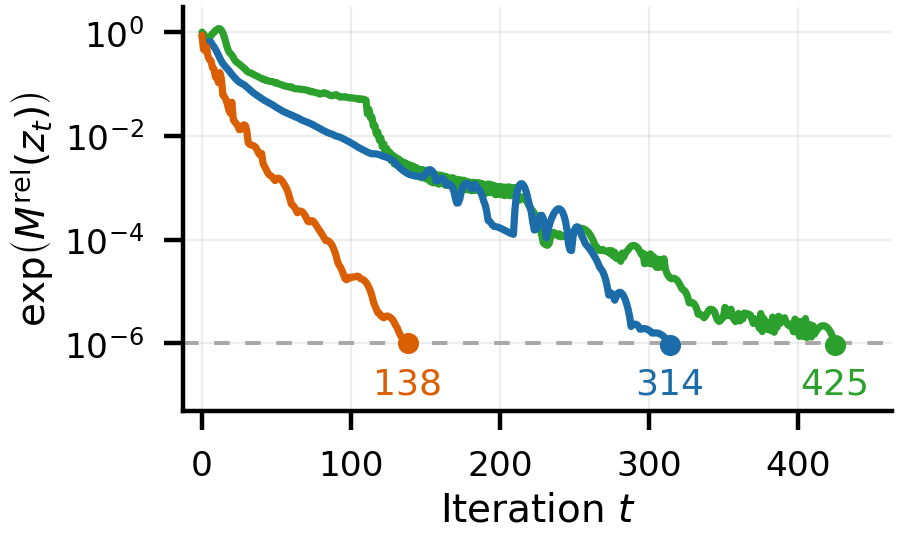}
        \caption{Min-cost flow}
        \label{fig:synthetic-residual-mcf}
    \end{subfigure}\hfill
    \begin{subfigure}[t]{0.331\linewidth}
        \centering
        \includegraphics[width=\linewidth]{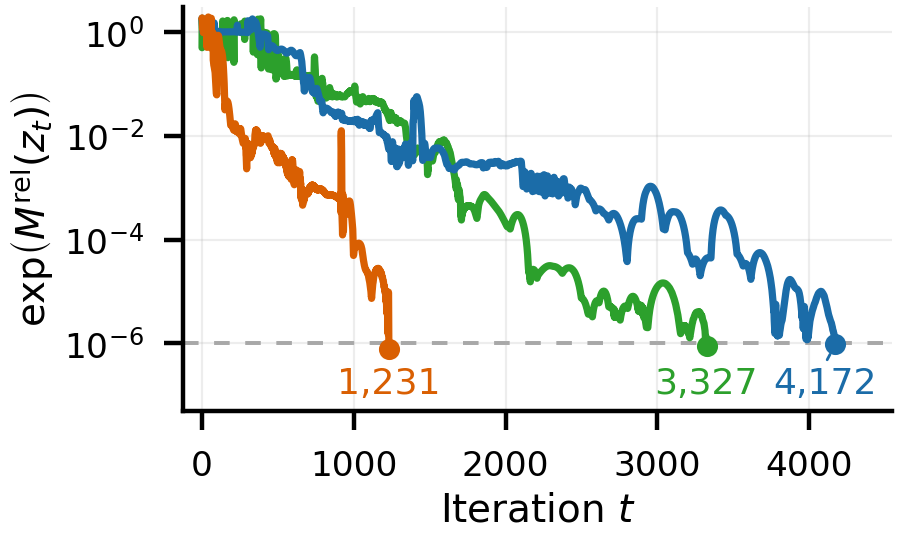}
        \caption{Robust inventory}
        \label{fig:synthetic-residual-inventory}
    \end{subfigure}\hfill
    \begin{subfigure}[t]{0.331\linewidth}
        \centering
        \includegraphics[width=\linewidth]{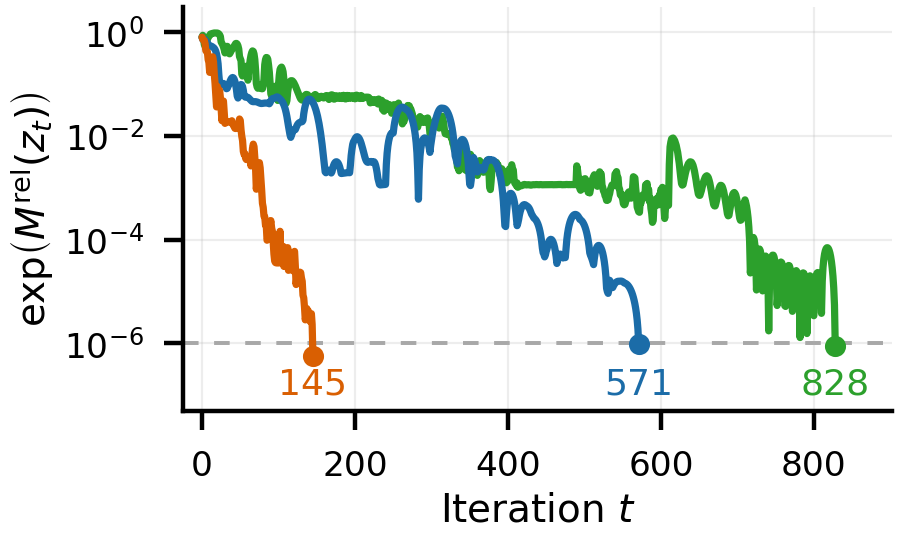}
        \caption{$L_\infty$ regression}
        \label{fig:synthetic-residual-regression}
    \end{subfigure}
    \caption{Convergence on one unseen LP per family at the largest training size. Labels mark the first iteration meeting tolerance $10^{-6}$ (dashed line).}
    \label{fig:synthetic-residuals}
\end{figure}
 
The first row for each family corresponds to the largest training size ($1\times$), and the following rows evaluate the same policy on larger LPs without retraining.
At $1\times$, \textcolor{black}{GALLOP} requires fewer iterations and matrix--vector products for all six families.
\textcolor{black}{GALLOP} generalizes to $15\times$ for Random LP, $400\times$ for Transport, $14\times$ for Auction, $12\times$ for Min-cost flow, $3\times$ for Robust inventory, and $7\times$ for $L_\infty$ regression, with gains in all three metrics at these sizes.
Size multipliers are measured by the number of variables, except for Robust inventory (item count) and $L_\infty$ regression (constraint count at fixed $n=9$).

\begin{table}[!t]
\centering
\fontsize{8}{9.5}\selectfont
\setlength{\tabcolsep}{0pt}
\renewcommand{\arraystretch}{1.0}
\captionsetup{font=small,skip=5pt}
\caption{Test results (four-LP medians). The “Size ratio” column reports the ratio of test size to each family's largest training size. Size is measured in $n$ except for robust inventory (item count $N$) and $L_\infty$ regression ($m$ at fixed $n=9$). Ours denotes \textcolor{black}{GALLOP}, HT denotes GA-PDHG (HT).}
\label{tab:main-results}
\begin{tabular*}{\linewidth}{@{\extracolsep{\fill}}llc rrr rrr rrr@{}}
\toprule
& & & \multicolumn{3}{c}{Iterations} & \multicolumn{3}{c}{\shortstack{Matrix--vector\\products}} & \multicolumn{3}{c}{\shortstack{Algorithm wall\\clock (ms)}}\\
\cmidrule(lr){4-6}\cmidrule(lr){7-9}\cmidrule(l){10-12}
Family & $(m,n)$ & Size ratio & Ours & HT & MPAX & Ours & HT & MPAX & Ours & HT & MPAX\\
\midrule
Random LP & (10,000, 20,000) & $1\times$ & \textbf{284} & 440 & 664 & \textbf{570} & 883 & 1,384 & \textbf{110} & 140 & 408\\
 & (30,000, 60,000) & $3\times$ & \textbf{280} & 421 & 663 & \textbf{561} & 844 & 1,384 & \textbf{151} & 199 & 1,175\\
 & (150,000, 300,000) & $15\times$ & \textbf{290} & 428 & 702 & \textbf{582} & 859 & 1,468 & \textbf{450} & 626 & 7,202\\
\addlinespace[2pt]
Transport & (320, 25,600) & $1\times$ & \textbf{164} & 633 & 548 & \textbf{329} & 1,268 & 1,150 & \textbf{39} & 107 & 87\\
 & (1,600, 640,000) & $25\times$ & \textbf{153} & 533 & 536 & \textbf{308} & 1,068 & 1,128 & \textbf{102} & 286 & 308\\
 & (6,400, 10,240,000) & $400\times$ & \textbf{200} & 570 & 524 & \textbf{403} & 1,142 & 1,105 & \textbf{1,998} & 5,222 & 7,334\\
\addlinespace[2pt]
Auction & (6,400, 25,600) & $1\times$ & \textbf{190} & 928 & 980 & \textbf{381} & 1,859 & 2,025 & \textbf{70} & 256 & 198\\
 & (32,000, 128,000) & $5\times$ & \textbf{190} & 923 & 994 & \textbf{382} & 1,848 & 2,055 & \textbf{84} & 316 & 479\\
 & (89,600, 358,400) & $14\times$ & \textbf{192} & 926 & 1,066 & \textbf{387} & 1,853 & 2,198 & \textbf{134} & 521 & 1,352\\
\addlinespace[2pt]
Min-cost flow & (4,800, 28,800) & $1\times$ & \textbf{164} & 302 & 440 & \textbf{329} & 606 & 928 & \textbf{49} & 68 & 76\\
 & (24,000, 144,000) & $5\times$ & \textbf{234} & 356 & 444 & \textbf{469} & 715 & 939 & \textbf{80} & 94 & 109\\
 & (57,600, 345,600) & $12\times$ & \textbf{242} & 370 & 482 & \textbf{485} & 741 & 1,010 & \textbf{115} & 142 & 147\\
\addlinespace[2pt]
Robust inventory & (1,500, 4,401) & $1\times$ & \textbf{1,905} & 4,266 & 3,875 & \textbf{3,812} & 8,535 & 7,872 & 507 & 915 & \textbf{432}\\
 & (2,980, 8,801) & $2\times$ & \textbf{2,311} & 7,768 & 5,348 & \textbf{4,624} & 15,537 & 10,854 & \textbf{570} & 1,494 & 661\\
 & (4,460, 13,201) & $3\times$ & \textbf{2,874} & 25,798 & 6,121 & \textbf{5,749} & 51,599 & 12,391 & \textbf{858} & 5,978 & 895\\
\addlinespace[2pt]
$L_\infty$ regression & (240, 9) & $1\times$ & \textbf{150} & 586 & 834 & \textbf{303} & 1,175 & 1,734 & \textbf{24} & 54 & 82\\
 & (1,200, 9) & $5\times$ & \textbf{236} & 673 & 760 & \textbf{475} & 1,348 & 1,583 & \textbf{41} & 78 & 69\\
 & (1,680, 9) & $7\times$ & \textbf{358} & 752 & 782 & \textbf{718} & 1,505 & 1,626 & \textbf{62} & 93 & 78\\
\bottomrule
\end{tabular*}
\end{table}

\FloatBarrier
\clearpage
\begingroup
\setlength{\intextsep}{4pt}
\setlength{\columnsep}{12pt}
\begin{wrapfigure}{R}{0.47\linewidth}
\begin{minipage}[t]{\linewidth}
\centering
\captionsetup{type=figure,font=small,justification=raggedright,singlelinecheck=false,hypcap=false,skip=4pt}
\includegraphics[width=\linewidth]{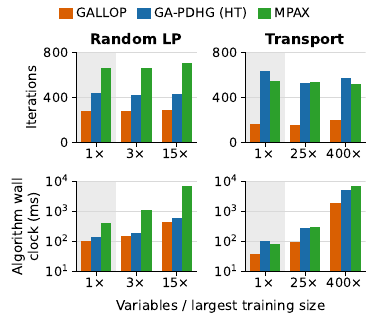}
\caption{Size generalization (four-LP medians). Shading marks the largest training size.}
\label{fig:synthetic-wall-clock}

\par\vspace{8pt}
\small
\captionsetup{type=table,font=small,position=top,justification=raggedright,singlelinecheck=false,hypcap=false,skip=4pt}
\caption{Transport control ablations (four-LP medians, without retraining).}
\label{tab:head-ablation}
\setlength{\tabcolsep}{2.5pt}
\begin{tabular}{@{}llr@{}}
\toprule
Control group & Replaced by & Iterations\\
\midrule
\multicolumn{2}{@{}l}{--- (\textcolor{black}{GALLOP} as trained)} & \textbf{165.0}\\
\addlinespace[2pt]
Step size $\eta_t$ & $0.998/\lVert A\rVert_2$ & 457.0\\
Primal weight $\omega_t$ & PID rule \Eqref{eq:pdlp-weight-background} & 224.5\\
$\alpha^x_t,\alpha^y_t,\zeta_t$ & Halpern & 209.0\\
History $\boldsymbol\beta_t$ & $\boldsymbol\beta_t=\mathbf 0$ & 190.5\\
History $\boldsymbol\beta_t$ & Anderson & 908.0\\
Restart flag $e_t$ & \begin{tabular}[t]{@{}l@{}}three conditions\\\Eqref{eq:pdlp_restart_condition}\end{tabular} & 181.0\\
\addlinespace[2pt]
All groups & GA-PDHG (HT) & 633.0\\
\bottomrule
\end{tabular}

\end{minipage}
\end{wrapfigure}
\noindent
Figure~\ref{fig:synthetic-wall-clock} highlights the corresponding algorithm wall clock savings for random LP and Transport.
At $400\times$, Transport takes 1,998\,ms for \textcolor{black}{GALLOP}, compared with 5,222\,ms for GA-PDHG (HT) and 7,334\,ms for MPAX.
Tables~\ref{tab:ladder-results-a} and~\ref{tab:ladder-results-b} provide results for all test sizes.

\subsection{Ablation study}
\label{sec:ablation}
We evaluate the contribution of each learned control group on Transport at the largest training size, $m=320$ and $n=25{,}600$.
At evaluation time, we replace one control group with a hand-tuned rule and retain the other learned controls without retraining.
We also test Anderson acceleration in place of the learned history coefficients.
Table~\ref{tab:head-ablation} reports the median number of iterations to reach $\epsilon_{\mathrm{tol}}=10^{-6}$ on all four LPs.
Among substitutions with hand-tuned rules, replacing the learned step size has the largest effect, increasing the iterations from 165 to 457.
Replacing any of the other four groups increases the median by $10\%$--$36\%$.
These results support the effectiveness of GA-PDHG and retaining all five learned control groups in the trained policy.
Since the remaining controls are kept fixed rather than retrained, this experiment measures reliance of the learned joint policy on each control group, rather than the best performance achievable after removing that group and retraining.
\finishwrap\endgroup

\subsection{Comparison on Public dataset}
\label{sec:public-dataset}

\begingroup
\setlength{\intextsep}{4pt}
\setlength{\columnsep}{10pt}
\Needspace{20\baselineskip}
\leavevmode
\begin{wraptable}{r}{0.29\linewidth}
\begin{minipage}[t]{\linewidth}
\centering
\small
\captionsetup{type=table,font=small,position=top,justification=raggedright,singlelinecheck=false,hypcap=false,skip=4pt}
\caption{IP-S. Mean iteration counts.}
\label{tab:ip-s-results}
\setlength{\tabcolsep}{0pt}
\begin{tabular*}{\linewidth}{@{\extracolsep{\fill}}lr@{}}
\toprule
Solver & Iterations\\
\midrule
GALLOP & \textbf{186.7}\\
MPAX & 1,089.0\\
PDHG-Net + PDLP & 422\\
PDLP & 550.4\\
\bottomrule
\end{tabular*}
\par\vspace{6pt}
\caption{Comparison results on IP-L. Zero-shot evaluation.}
\label{tab:ip-l-results}
\setlength{\tabcolsep}{0pt}
\begin{tabular*}{\linewidth}{@{\extracolsep{\fill}}lr@{}}
\toprule
Solver & Iterations\\
\midrule
GALLOP & \textbf{272.2}\\
MPAX & OOM\\
PDLP & 851.2\\
\bottomrule
\end{tabular*}
\par\vspace{4pt}
{\footnotesize\raggedright OOM: out of memory on  RTX~4090.\par}
\end{minipage}
\end{wraptable}
\noindent
We evaluate on the public item-placement benchmark from \citet{Li2024PDHGUnrolled}.
IP-S has $\left(m,n\right)=\left(15{,}525,31{,}350\right)$ and $\operatorname{nnz}\left(A\right)=5{,}291{,}250$, while IP-L has $\left(m,n\right)=\left(91{,}575,266{,}450\right)$ and $\operatorname{nnz}\left(A\right)=94{,}826{,}250$.

We train GALLOP on 128 IP-S instances and evaluate the same policy on ten unseen instances of IP-S and IP-L. IP-L evaluation tests zero-shot size generalization without retraining.

Tables~\ref{tab:ip-s-results} and~\ref{tab:ip-l-results} report mean iteration counts for each solvers.
GALLOP and MPAX use tolerance $10^{-6}$. PDLP uses the released PDHG-Net settings with absolute tolerance $10^{-6}$ and relative tolerance $10^{-4}$. This stopping criterion is looser than GALLOP's. PDLP is the OR-Tools~9.8.3296 version run on our machine.
The IP-S result of PDHG-Net is quoted from Table 2 in \citet{Li2024PDHGUnrolled}, which predicts a initialization for PDLP, hence the label PDHG-Net + PDLP.

On IP-S, GALLOP requires $5.83\times$ and $2.95\times$ less number of iterations relative to MPAX and PDLP, respectively, and uses fewer iterations than the reported PDHG-Net + PDLP result.
On IP-L, the reduction relative to PDLP is a factor of $3.13\times$.
Its iteration count increases only $1.46\times$ despite $8.50\times$ more variables and $17.92\times$ more nonzeros, demonstrating size generalization on this benchmark.

\finishwrap\endgroup

\section{Related Work}
\label{sec:related-work}
\textbf{RL for discrete-continuous hybrid action spaces.}
P-DQN~\citep{xiong2018pdqn} and MP-DQN~\citep{bester2019mpdqn} use action-value critics, unlike GALLOP's state-value critic.
H-PPO~\citep{fan2019hybrid} also uses a state-value critic and separately clips actor objectives; DISC~\citep{han2019disc} instead clips coordinatewise importance ratios.
GALLOP specializes actor-wise clipping to five functional solver-control groups, marginalizing acceleration controls on restart transitions while retaining step-size and primal-weight updates.

\textbf{L2O for LP.}
Prior work learns simplex initial bases~\citep{fan2023smart}, imitates pivot experts~\citep{liu2024pivot}, or emulates interior-point methods~\citep{qian2024exploring}.
These approaches target initialization or expert-derived decisions rather than jointly learning PDHG acceleration and restarting.
More directly related, PDHG-Net~\citep{Li2024PDHGUnrolled} unrolls PDHG to warm-start PDLP.
GALLOP instead retains learned feedback throughout the solve, without handing subsequent iterations to an external solver.

\textbf{RL in L2O.}
RL learns simplex pivot-rule selection~\citep{suriyanarayana2022rl} and presolve routines~\citep{kuang2025presolve}, addressing discrete solver decisions rather than hybrid iterative control.
RLQP~\citep{Ichnowski2021Accelerating} adapts continuous penalty parameters of the alternating direction method of multipliers (ADMM), whereas TFPnP~\citep{wei2020tfpnp} combines continuous parameter tuning with discrete termination for inverse imaging.
GALLOP instead jointly controls PDHG step sizes, primal weights, generalized acceleration, and restarts through a dimension-agnostic policy.
Its discrete decisions reset acceleration state rather than terminate optimization.

\endgroup

\section{Conclusion and Limitations}
\label{sec:conclusion}

We introduced GALLOP, which uses reinforcement learning to jointly control PDHG parameters, acceleration coefficients, and restart decisions.
Its GA-PDHG update combines primal and dual extrapolation, history corrections, and an anchor correction in a common action space.
A dimension-agnostic policy enables deployment on larger LPs without retraining.
Across six LP families, GALLOP reduces iteration counts relative to MPAX and achieves algorithm wall-clock speedups of up to $16.0\times$.
Policies trained on smaller LPs generalize to sizes up to $400\times$ larger, with additional evidence of size generalization on the public item-placement benchmark.

\textbf{Limitations.} Available GPU memory limited the maximum LP sizes evaluated in this study.
Experiments with more GPU memory would allow us to test whether the observed size generalization extends to substantially larger LPs.

\FloatBarrier

\clearpage
\appendix
\section*{Appendix}

\section{Extended Related Work}
\label{app:extended-related-work}

\paragraph{Learning to Optimize and Deep unfolding.}
Learning to optimize (L2O) leverages data-driven machine learning in order to develop and improve optimization methods.
Key survey papers include \citep{Chen2022Learning} and \citep{Amos2023Tutorial}.
Deep unfolding is a model-based L2O approach that interprets each iteration of an optimizer as a layer of a neural network~\citep{Hershey2014Deep}.
Learnable parameters are introduced and the optimizer is unrolled for a fixed number of iterations and trained using backpropagation-through-time, much like a recurrent neural network~\citep{Monga2021Algorithm}.
The idea has seen great success in the signal and image processing communities after the seminal work by \citet{Gregor2010Learning} on learned iterative shrinkage and soft-thresholding (LISTA).
These approaches focus on learning iteration-varying parameters that can accelerate the optimizer, amortize the cost of tuning, and improve generalization on unseen instances~\citep{Sambharya2024Learning,Amos2023Tutorial}.
Deep unfolding has been recently extended to general classes of convex optimization problems, such as linear programming~\citep{Li2024PDHGUnrolled}, quadratic programming~\citep{saravanos2025deep,Oshin2026Deep}, and conic programming~\citep{Oshin2026Scalable}.
Another line of recent work has studied guarantees of learned optimizers, including PAC-Bayes generalization bounds~\citep{Sucker2025Learning} and worst-case performance bounds~\citep{Sambharya2025Datadriven}.
However, deep unfolding often struggles to scale to high-dimensional problems in machine learning.
This is primarily due to the intractable memory requirement of storing an entire trajectory of optimizer iterations in memory in order to backpropagate through them via supervised learning.
These long-horizon unrolls are also numerically unstable, as they suffer from the vanishing/exploding gradient problem~\citep{Metz2019Understanding}.
Moreover, deep unfolding struggles to learn discrete decisions, such as when to restart in PDLP.
Finally, supervised L2O requires optimal solutions, which are expensive to compute, especially for large-scale problems.
These points make reinforcement learning a more suitable choice for the large-scale LPs that we target in this work.

\section{Proofs from the Paper}
\label{app:proofs}

\subsection{Fixed Points of PDHG and LP Optimality}
\label{app:fixed_point_optimality}
\label{app:pdhg-fixed-point}

We first establish that solving the LP is equivalent to finding a fixed point of the PDHG operator $\mathcal{H}$.
Fix $\omega,\eta>0$ and write $\mathcal H=\mathcal H\left(\cdot;\omega,\eta\right)$. Let $Z^\star$ denote the set of primal--dual optimal solutions of \Eqref{eq:LP}.
For $\alpha\in[0,1]$, define $\mathcal R_\alpha\left(z\right):=\left(1+\alpha\right)\mathcal H\left(z\right)-\alpha z$.

Define the primal--dual KKT mapping $\mathcal{F}:X\times Y\rightarrow 2^{\mathbb{R}^{n+m}}$ by
\begin{equation}
\label{eq:app_kkt_mapping}
    \mathcal{F}(z)
    =
    \mathcal{F}(x,y)
    :=
    \begin{pmatrix}
        c-A^\top y+N_X(x)\\
        Ax-b+N_Y(y)
    \end{pmatrix},
\end{equation}
where $N_X$ and $N_Y$ denote the normal-cone mappings of $X$ and $Y$, respectively.

\begin{lemma}[KKT characterization]
\label{lem:app_kkt}
A primal--dual point $z^\star=(x^\star,y^\star)$ is optimal for \Eqref{eq:LP} if and only if
\begin{equation}
\label{eq:app_kkt_zero}
    0\in\mathcal{F}(z^\star).
\end{equation}
Equivalently,
\begin{align}
    0&\in c-A^\top y^\star+N_X(x^\star),\label{eq:app_kkt_primal}\\
    0&\in Ax^\star-b+N_Y(y^\star),\label{eq:app_kkt_dual}
\end{align}
where \Eqref{eq:app_kkt_dual} is equivalent to
\begin{align}
    Px^\star&=q,\\
    Gx^\star&\ge h,\\
    y^{\mathrm{in},\star}&\ge0,\\
    y^{\mathrm{in},\star}\odot(Gx^\star-h)&=0.
\end{align}
\end{lemma}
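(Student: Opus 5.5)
The plan is to derive Lemma~\ref{lem:app_kkt} from standard convex-duality arguments for the saddle problem \Eqref{eq:saddle}. LP optimality of $x^\star$, together with dual optimality of $y^\star$ for the dual problem $\max_{y\in Y}\inf_{x\in X}L(x,y)$, is equivalent to $(x^\star,y^\star)$ being a saddle point of $L$ over $X\times Y$. Linear programs have no duality gap when they are feasible and bounded, so strong duality holds and a primal--dual optimal pair is exactly a saddle point. Concretely, I would show
\[
L(x^\star,y)\le L(x^\star,y^\star)\le L(x,y^\star)\quad\text{for all } x\in X,\ y\in Y .
\]
The forward direction (optimal pair implies saddle point) uses strong duality: $c^\top x^\star=\inf_{x\in X}L(x,y^\star)\le L(x^\star,y^\star)\le \sup_{y\in Y}L(x^\star,y)=c^\top x^\star$. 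The last equality holds because $x^\star$ is feasible. The converse is weak duality, read off from the saddle inequalities.

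Next I would convert the saddle inequalities into first-order inclusions. $L(\cdot,y^\star)$ is linear (hence convex and differentiable) with gradient $c-A^\top y^\star$, so $x^\star$ minimizes it over the closed convex set $X$ if and only if $-(c-A^\top y^\star)\in N_X(x^\star)$. This is exactly \Eqref{eq:app_kkt_primal}. Similarly, $L(x^\star,\cdot)$ is linear with gradient $b-Ax^\star$, so $y^\star$ maximizes it over $Y$ if and only if $b-Ax^\star\in N_Y(y^\star)$, which is \Eqref{eq:app_kkt_dual}. Stacking the two inclusions gives $0\in\mathcal F(z^\star)$.

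Finally, I would unpack \Eqref{eq:app_kkt_dual} using the product structure $Y=\mathbb R^{m_e}\times\mathbb R_+^{m_i}$, whose normal cone splits blockwise:
\[
N_Y(y)=\{0\}^{m_e}\times N_{\mathbb R_+^{m_i}}(y^{\mathrm{in}}),
\]
where $N_{\mathbb R_+^{m_i}}(y^{\mathrm{in}})=\{v\le0:\ v\odot y^{\mathrm{in}}=0\}$ for $y^{\mathrm{in}}\ge0$ and is empty otherwise. On the equality block this gives $Px^\star=q$. On the inequality block it gives $y^{\mathrm{in},\star}\ge0$ and $h-Gx^\star\le0$, i.e.\ $Gx^\star\ge h$, together with complementary slackness $y^{\mathrm{in},\star}\odot(Gx^\star-h)=0$.

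The main obstacle is the first step. I must state precisely what ``optimal for \Eqref{eq:LP}'' means, namely a primal--dual optimal pair, and invoke LP strong duality (for instance via Farkas' lemma) to guarantee a zero duality gap at optimal pairs. The remaining steps are routine normal-cone computations.
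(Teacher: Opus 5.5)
Your proposal is correct and takes essentially the paper's route: characterize optimality by the first-order (normal-cone) conditions of the saddle problem \Eqref{eq:saddle}, then unpack $N_Y$ blockwise into equality feasibility, inequality feasibility, dual sign constraints, and complementary slackness. The one difference is that you use LP strong and weak duality to show explicitly that a primal--dual optimal pair is exactly a saddle point, whereas the paper only notes that the conditions are necessary and sufficient because the problem is convex--concave and polyhedral.
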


\begin{proof}[Proof]
The saddle-point formulation in \Eqref{eq:saddle} is
\begin{equation}
    \min_{x\in X}\max_{y\in Y}
    \left(c^\top x-y^\top Ax+b^\top y\right).
\end{equation}
Its first-order optimality conditions are
\begin{equation}
    0\in c-A^\top y^\star+N_X(x^\star),
    \qquad
    0\in Ax^\star-b+N_Y(y^\star),
\end{equation}
which are necessary and sufficient because the problem is convex--concave and polyhedral.

For the equality component, $y^{\mathrm{eq}}\in\mathbb{R}^{m_e}$ is unrestricted, and therefore the corresponding normal cone is zero. Hence $Px^\star=q$. For the inequality component,
\begin{equation}
    0\in Gx^\star-h+N_{\mathbb{R}^{m_i}_+}(y^{\mathrm{in},\star}),
\end{equation}
which is equivalent to primal feasibility, dual feasibility, and complementary slackness.
\end{proof}

\begin{lemma}[Equivalence between LP solutions and PDHG fixed points]
\label{lem:app_fixed_point}
The primal--dual optimal set coincides with the fixed-point set of the PDHG operator:
\begin{equation}
\label{eq:app_fix_equals_opt}
    Z^\star
    =
    \operatorname{Fix}(\mathcal{H})
    :=
    \{z\in X\times Y:\mathcal{H}(z)=z\}.
\end{equation}
Furthermore,
\begin{equation}
\label{eq:app_reflection_fix}
    \operatorname{Fix}(\mathcal{R}_\alpha)
    =
    \operatorname{Fix}(\mathcal{H})
    =
    Z^\star.
\end{equation}
\end{lemma}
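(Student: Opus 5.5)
The plan is to write the fixed-point condition $\mathcal H(z)=z$ as the optimality conditions of the two projection subproblems. Then we compare those conditions with the KKT inclusions of Lemma~\ref{lem:app_kkt}. We use the standard characterization of Euclidean projection onto a closed convex set $C$: $p=\operatorname{proj}_C(v)$ if and only if $v-p\in N_C(p)$. Since normal cones are cones, $\tau w\in N_C(p)$ for some $\tau>0$ if and only if $w\in N_C(p)$.

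\emph{Step 1 (fixed points are optimal).} Let $z=(x,y)\in X\times Y$ with $\mathcal H(z)=z$. The primal update gives $x=\operatorname{proj}_X[x-\tfrac{\eta}{\omega}(c-A^\top y)]$. Hence $-\tfrac{\eta}{\omega}(c-A^\top y)\in N_X(x)$, which by the cone property is $0\in c-A^\top y+N_X(x)$, i.e.\ \Eqref{eq:app_kkt_primal}. Since the primal output equals $x$, the extrapolated point in the dual update is $2x-x=x$. The dual update then reads $y=\operatorname{proj}_Y[y+\omega\eta(b-Ax)]$, so $\omega\eta(b-Ax)\in N_Y(y)$, which is \Eqref{eq:app_kkt_dual}. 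Lemma~\ref{lem:app_kkt} gives $z\in Z^\star$.

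\emph{Step 2 (optimal points are fixed).} Conversely, take $z^\star\in Z^\star$. Scaling the KKT inclusions by $\eta/\omega$ and $\omega\eta$ and applying the projection characterization shows that the primal step returns $x^\star$. Given this, the dual step uses $2x^\star-x^\star=x^\star$ and returns $y^\star$. So $\mathcal H(z^\star)=z^\star$. The one point requiring care is this sequential structure: the dual step depends on the freshly computed primal output. Computing the primal step first removes the difficulty. Membership $z^\star\in X\times Y$ holds by feasibility.

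\emph{Step 3 (reflected operator).} Here $\mathcal R_\alpha(z)=z$ is equivalent to $(1+\alpha)(\mathcal H(z)-z)=0$, and $1+\alpha\ge1>0$, so $\operatorname{Fix}(\mathcal R_\alpha)=\operatorname{Fix}(\mathcal H)$. If the fixed-point set of $\mathcal R_\alpha$ is taken over all of $\mathbb R^{n+m}$, note that any fixed point satisfies $z=\mathcal H(z)\in X\times Y$, so restricting the domain loses nothing.

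I expect no real obstacle. The only subtlety is keeping the normal-cone sign conventions consistent with the $\min_x\max_y$ structure, where the dual ascent step yields $b-Ax\in N_Y(y)$.
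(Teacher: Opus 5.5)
Your proposal is correct and follows essentially the same route as the paper. Both arguments use the projection characterization to turn $\mathcal H(z)=z$ into the KKT inclusions of Lemma~\ref{lem:app_kkt}, and back again by computing the primal step first so the extrapolated point collapses to $x^\star$; both then obtain $\operatorname{Fix}(\mathcal R_\alpha)=\operatorname{Fix}(\mathcal H)$ from $z-\mathcal R_\alpha(z)=(1+\alpha)\bigl(z-\mathcal H(z)\bigr)$. Your remark that fixed points of $\mathcal R_\alpha$ automatically lie in $X\times Y$ is a small extra precision that the paper leaves implicit.
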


\begin{proof}[Proof]
Let
\begin{equation}
    z=\mathcal{H}(\hat z)=(x,y).
\end{equation}
\Eqref{eq:pdhg_iteration} gives
\begin{align}
    x&=
    \operatorname{proj}_X
    \left[
        \hat x-\frac{\eta}{\omega}(c-A^\top \hat y)
    \right],\label{eq:app_pdhg_x}\\
    y&=
    \operatorname{proj}_Y
    \left[
        \hat y+\omega\eta\left(b-A(2x-\hat x)\right)
    \right].\label{eq:app_pdhg_y}
\end{align}
Using the projection characterization
\begin{equation}
    v=\operatorname{proj}_C(a)
    \quad\Longleftrightarrow\quad
    a-v\in N_C(v),
\end{equation}
\Eqref{eq:app_pdhg_x} and \Eqref{eq:app_pdhg_y} imply
\begin{align}
    \frac{\omega}{\eta}(\hat x-x)+A^\top \hat y-c&\in N_X(x),\label{eq:app_proj_x}\\
    \frac{1}{\omega\eta}(\hat y-y)+b-A(2x-\hat x)&\in N_Y(y).\label{eq:app_proj_y}
\end{align}

Suppose first that $\hat z=\mathcal{H}(\hat z)$. Then $x=\hat x$ and $y=\hat y$, so
\begin{equation}
    A^\top \hat y-c\in N_X(\hat x),
    \qquad
    b-A\hat x\in N_Y(\hat y).
\end{equation}
Equivalently,
\begin{equation}
    0\in c-A^\top \hat y+N_X(\hat x),
    \qquad
    0\in A\hat x-b+N_Y(\hat y).
\end{equation}
By Lemma~\ref{lem:app_kkt}, $\hat z\in Z^\star$.

Conversely, let $\hat z\in Z^\star$. Then the KKT conditions imply
\begin{equation}
    A^\top \hat y-c\in N_X(\hat x),
    \qquad
    b-A\hat x\in N_Y(\hat y).
\end{equation}
The projection characterization gives
\begin{equation}
    \hat x=
    \operatorname{proj}_X
    \left[
        \hat x-\frac{\eta}{\omega}(c-A^\top \hat y)
    \right]
\end{equation}
and
\begin{equation}
    \hat y=
    \operatorname{proj}_Y
    \left[
        \hat y+\omega\eta(b-A\hat x)
    \right].
\end{equation}
Since the first equation gives $x=\hat x$, the second equation agrees with \Eqref{eq:app_pdhg_y}. Hence $\mathcal{H}(\hat z)=\hat z$.

Finally,
\begin{equation}
    \hat z-\mathcal{R}_\alpha(\hat z)
    =
    (1+\alpha)\bigl(\hat z-\mathcal{H}(\hat z)\bigr),
\end{equation}
and therefore $\operatorname{Fix}(\mathcal{R}_\alpha)=\operatorname{Fix}(\mathcal{H})$.
\end{proof}

\begin{lemma}[Metric and nonexpansiveness]
\label{lem:app-metric-nonexpansiveness}
For fixed $\omega,\eta>0$, define
\begin{equation}
\label{eq:pdhg-metric}
    Q\left(\omega,\eta\right):=
    \begin{pmatrix}
        \frac{\omega}{\eta}I_n&A^\top\\
        A&\frac{1}{\omega\eta}I_m
    \end{pmatrix}.
\end{equation}
Then $Q\succ0$ if and only if $\eta\lVert A\rVert_2<1$.
Under this condition, $\mathcal H=\mathcal H\left(\cdot;\omega,\eta\right)$ is firmly nonexpansive in the norm $\lVert v\rVert_Q:=\sqrt{v^\top Qv}$,
\begin{equation}
    \left\lVert\mathcal H\left(\hat z\right)-\mathcal H\left(\hat z'\right)\right\rVert_Q^2
    \leq\left(\mathcal H\left(\hat z\right)-\mathcal H\left(\hat z'\right)\right)^\top
    Q\left(\hat z-\hat z'\right).
\end{equation}
Consequently, it is nonexpansive in this norm, and the fixed-point residual
\begin{equation}
\label{eq:fpr}
    M^{\mathrm{fp}}\left(\hat z;\omega,\eta\right)
    :=\left\lVert\mathcal H\left(\hat z;\omega,\eta\right)-\hat z\right\rVert_Q
\end{equation}
vanishes if and only if $\hat z\in Z^\star$.
\end{lemma}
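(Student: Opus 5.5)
The plan is to recast one PDHG step as a preconditioned proximal-point step for the KKT mapping $\mathcal F$ in \Eqref{eq:app_kkt_mapping}. Once this is done, all three claims follow from monotonicity of $\mathcal F$ and Lemma~\ref{lem:app_fixed_point}.

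\textbf{Step 1: positive definiteness.} I would treat $Q\left(\omega,\eta\right)\succ0$ with a Schur complement. The upper-left block $\frac{\omega}{\eta}I_n$ is positive definite, so $Q\succ0$ if and only if
\begin{equation*}
    \frac{1}{\omega\eta}I_m-\frac{\eta}{\omega}AA^\top\succ0 .
\end{equation*}
Multiplying by $\omega\eta>0$, this is equivalent to $I_m-\eta^2AA^\top\succ0$. That holds if and only if $\eta^2\lambda_{\max}\left(AA^\top\right)<1$, i.e.\ $\eta\lVert A\rVert_2<1$.

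\textbf{Step 2: PDHG as a resolvent.} Let $z=\mathcal H\left(\hat z\right)$. I would reuse the inclusions \Eqref{eq:app_proj_x} and \Eqref{eq:app_proj_y} from the proof of Lemma~\ref{lem:app_fixed_point}.
\begin{itemize}
    \item In the first, add and subtract $A^\top y$. This gives $\frac{\omega}{\eta}\left(\hat x-x\right)+A^\top\left(\hat y-y\right)\in c-A^\top y+N_X\left(x\right)$.
    \item In the second, regroup $b-A\left(2x-\hat x\right)=b-Ax+A\left(\hat x-x\right)$. This gives $A\left(\hat x-x\right)+\frac{1}{\omega\eta}\left(\hat y-y\right)\in Ax-b+N_Y\left(y\right)$.
\end{itemize}
Together these say exactly $Q\left(\hat z-z\right)\in\mathcal F\left(z\right)$.

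\textbf{Step 3: monotonicity of $\mathcal F$.} The mapping $\mathcal F$ is monotone. Its single-valued part $z\mapsto\left(c-A^\top y,\;Ax-b\right)$ is affine with a skew-symmetric linear part, so it contributes zero to the monotonicity inner product. The normal-cone part $N_X\times N_Y$ is monotone because $X\times Y$ is convex.

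\textbf{Step 4: firm nonexpansiveness.} Take two inputs $\hat z,\hat z'$ with outputs $z,z'$. Monotonicity applied to $Q\left(\hat z-z\right)\in\mathcal F\left(z\right)$ and $Q\left(\hat z'-z'\right)\in\mathcal F\left(z'\right)$ gives
\begin{equation*}
    \left(z-z'\right)^\top Q\left(\left(\hat z-\hat z'\right)-\left(z-z'\right)\right)\geq0 ,
\end{equation*}
which rearranges to the stated firm nonexpansiveness inequality. Since $Q\succ0$, $\langle u,v\rangle_Q:=u^\top Qv$ is an inner product. Applying Cauchy--Schwarz to the right-hand side and dividing by $\lVert\mathcal H\left(\hat z\right)-\mathcal H\left(\hat z'\right)\rVert_Q$ (the case where this is zero is trivial) yields nonexpansiveness.

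\textbf{Step 5: the fixed-point residual.} Because $Q\succ0$, $M^{\mathrm{fp}}\left(\hat z;\omega,\eta\right)=0$ if and only if $\mathcal H\left(\hat z\right)=\hat z$. The output of $\mathcal H$ always lies in $X\times Y$, so this means $\hat z\in\operatorname{Fix}\left(\mathcal H\right)$. By Lemma~\ref{lem:app_fixed_point}, that is equivalent to $\hat z\in Z^\star$.

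The only delicate point is Step 2. The extrapolated term $2x-\hat x$ has to produce exactly the off-diagonal blocks $A^\top$ and $A$ of $Q$, with signs that make the remainder the KKT operator evaluated at the output $z$ rather than the input $\hat z$. I would check this carefully block by block; everything after it is standard.
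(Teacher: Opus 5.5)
Your proof is correct and follows essentially the same route as the paper. Your inclusion $Q(\hat z-z)\in\mathcal F(z)$ is the operator form of the paper's summed projection inequality $(\hat z-z)^\top Q(z'-z)\leq(c-A^\top y)^\top(x'-x)+(Ax-b)^\top(y'-y)$ for all $z'\in X\times Y$, and your skew-symmetry/monotonicity step is the paper's ``coupling terms cancel'' after swapping primed and unprimed variables and adding; your Step 5 also makes explicit that $\mathcal H(\hat z)\in X\times Y$, which is needed to land in $\operatorname{Fix}(\mathcal H)$ as defined and which the paper leaves implicit.
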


\begin{proof}[Proof]
The Schur complement of the primal diagonal block in \Eqref{eq:pdhg-metric} gives
\begin{equation}
    Q\succ0\quad\Longleftrightarrow\quad
    I_m-\eta^2AA^\top\succ0
    \quad\Longleftrightarrow\quad\eta\lVert A\rVert_2<1.
\end{equation}
Fix parameters satisfying this condition. For $z=\mathcal H\left(\hat z;\omega,\eta\right)$, adding the two projection inequalities gives, for every $z'\in X\times Y$,
\begin{equation}
    \begin{aligned}
        \left(\hat z-z\right)^\top Q\left(z'-z\right)
        &\leq\left(c-A^\top y\right)^\top\left(x'-x\right)\\
        &\quad+\left(Ax-b\right)^\top\left(y'-y\right).
    \end{aligned}
\end{equation}
Apply this inequality to outputs $z=\mathcal H\left(\hat z\right)$ and $z'=\mathcal H\left(\hat z'\right)$, interchange the primed and unprimed variables, and add. The coupling terms cancel, yielding
\begin{equation}
    \lVert z-z'\rVert_Q^2
    \leq\left(z-z'\right)^\top Q\left(\hat z-\hat z'\right).
\end{equation}
This proves firm nonexpansiveness~\citep{lu2024halpern}. Applying Cauchy--Schwarz in the $Q$-inner product gives $\lVert z-z'\rVert_Q\leq\lVert\hat z-\hat z'\rVert_Q$. Positive definiteness gives $M^{\mathrm{fp}}\left(\hat z;\omega,\eta\right)=0$ if and only if $\mathcal H\left(\hat z\right)=\hat z$. Lemma~\ref{lem:app_fixed_point} then gives $\hat z\in Z^\star$.
\end{proof}

\subsection{Proof of Proposition~\ref{prop:ga-pdhg-optimality}}
\label{app:ga-pdhg-optimality}

\begin{proof}[Proof of Proposition~\ref{prop:ga-pdhg-optimality}]
By convergence, write $\hat z_t\to\bar{\hat z}$ and $z_t\to\bar z$.
Then
\begin{equation}
\label{eq:ga-pdhg-vanishing-differences}
    \left\lVert z_t-z_{t-1}\right\rVert_2\longrightarrow0,
    \qquad\left\lVert\hat z_{t+1}-\hat z_t\right\rVert_2\longrightarrow0.
\end{equation}
Since $\boldsymbol\beta_t$ is bounded, choose $B_\beta<\infty$ such that $\lVert\boldsymbol\beta_t\rVert_2\leq B_\beta$ for all $t$.
The fixed, finite history length gives, for sufficiently large $t$,
\begin{equation}
\label{eq:ga-pdhg-vanishing-history}
    \begin{aligned}
        \left\lVert\mathbf K_t\boldsymbol\beta_t\right\rVert_2
        &\leq B_\beta\left(\sum_{j=0}^{k-1}
        \left\lVert z_{t-j}-z_{t-j-1}\right\rVert_2^2\right)^{1/2}
        \longrightarrow0.
    \end{aligned}
\end{equation}
For $k=0$, the history correction and the sum are zero.
At a non-restart step, rearranging \Eqref{eq:gen_acc_iter} gives
\begin{equation}
\label{eq:ga-pdhg-input-output-identity}
    \begin{pmatrix}
        \left(1+\alpha_t^x\right)\left(x_t-\hat x_t\right)\\
        \left(1+\alpha_t^y\right)\left(y_t-\hat y_t\right)
    \end{pmatrix}
    =\hat z_{t+1}-\hat z_t-\mathbf K_t\boldsymbol\beta_t
    -\zeta_t\left(z_\star-z_t\right).
\end{equation}
Set $C_\delta:=\max\{1,\delta^{-1}\}$. The lower bound $1+\alpha_t^x,1+\alpha_t^y\geq\delta$ on nonrestart steps, the triangle inequality, and the vanishing anchor correction assumed in Proposition~\ref{prop:ga-pdhg-optimality} give
\begin{equation}
    \left\lVert z_t-\hat z_t\right\rVert_2
    \leq C_\delta\left(\left\lVert\hat z_{t+1}-\hat z_t\right\rVert_2
    +\left\lVert\mathbf K_t\boldsymbol\beta_t\right\rVert_2
    +\left\lVert\zeta_t\left(z_\star-z_t\right)\right\rVert_2\right)
    \longrightarrow0.
\end{equation}
At a restart, $\hat z_{t+1}=z_t$, so the same bound holds because $C_\delta\geq1$.
Thus $\bar{\hat z}=\bar z$.
Since $\eta_t$ and $\omega_t$ are uniformly positive and bounded, there exist constants satisfying
\begin{equation}
\label{eq:ga-pdhg-parameter-bounds}
    0<\underline\eta\leq\eta_t\leq\overline\eta<\infty,
    \qquad 0<\underline\omega\leq\omega_t\leq\overline\omega<\infty.
\end{equation}
Hence a subsequence $t_j\to\infty$ satisfies
\begin{equation}
    \left(\omega_{t_j},\eta_{t_j}\right)
    \longrightarrow\left(\omega_\infty,\eta_\infty\right)
    \in\left[\underline\omega,\overline\omega\right]
    \times\left[\underline\eta,\overline\eta\right].
\end{equation}
Euclidean projections onto the nonempty closed convex sets $X$ and $Y$ are continuous, so $\mathcal H$ is jointly continuous in its input and positive parameters.
Taking limits in \Eqref{eq:base_PDHG} along this subsequence gives
\begin{equation}
    \bar z=\lim_{j\to\infty}z_{t_j+1}
    =\lim_{j\to\infty}\mathcal H\left(\hat z_{t_j+1};\omega_{t_j},\eta_{t_j}\right)
    =\mathcal H\left(\bar z;\omega_\infty,\eta_\infty\right).
\end{equation}
Thus $\bar z$ is a fixed point of $\mathcal H\left(\cdot;\omega_\infty,\eta_\infty\right)$.
By Lemma~\ref{lem:app_fixed_point}, $\bar z\in Z^\star$, completing the proof.
\end{proof}

\subsection{Satisfying the assumptions in implementation}
\label{app:implementation-assumptions}

We relate the deterministic inference rule in Appendix~\ref{app:controller-policy} to the assumptions of Proposition~\ref{prop:ga-pdhg-optimality}.
The trained network parameters are fixed during each solve.
The implementation enforces the coefficient bounds and makes the anchor correction vanish whenever the output sequence converges.
Convergence of both $\{\hat z_t\}$ and $\{z_t\}$ remains a premise of the proposition.

\paragraph{Bounded history coefficients and positive relaxation.}
Each continuous actor has a final tanh hidden layer followed by a linear output layer.
For any scalar mean output, write
\begin{equation}
\label{eq:ga-pdhg-bounded-policy-mean}
    \mu\left(o\right)=w_\mu^\top h_\mu\left(o\right)+b_\mu,
    \qquad h_\mu\left(o\right)\in[-1,1]^{64},
    \qquad |\mu\left(o\right)|\leq\lVert w_\mu\rVert_1+|b_\mu|.
\end{equation}
The right-hand side is a finite constant for the trained network, independent of the observation and iteration.
At inference, $\boldsymbol\beta_t=\boldsymbol\mu_t^\beta$, so the fixed number $k$ of history coefficients is uniformly bounded.
The exponential transforms give $\alpha_t^x,\alpha_t^y>0$, satisfying the proposition's relaxation lower bound with $\delta=1$, and a bounded positive anchor multiplier $\widetilde\zeta_t=\exp\left(\mu_t^\zeta\right)\leq C_\zeta$ for some $C_\zeta<\infty$.

\paragraph{Step-size and primal-weight bounds.}
Let $a_A>0$ denote the fixed spectral-norm denominator, including the numerical safeguard, for the matrix supplied to PDHG.
Let $L_\omega>0$ be the configured primal-weight range in \Eqref{eq:learned_omega}.
The sigmoid and tanh transforms give
\begin{equation}
    \frac{0.1}{a_A}\leq\eta_t\leq\frac{5}{a_A},
    \qquad\omega_0e^{-L_\omega}\leq\omega_t\leq\omega_0e^{L_\omega}.
\end{equation}
Since $a_A$ and $\omega_0$ are positive and finite for each LP, both parameters are uniformly positive and bounded.
The weight is set relative to $\omega_0$ at every iteration, so these bounds do not accumulate over successive updates.
Proposition~\ref{prop:ga-pdhg-optimality} does not require $\eta_t\lVert A\rVert_2<1$.

\paragraph{Vanishing normalized anchor correction.}
The implemented coefficient is
\begin{equation}
\label{eq:ga-pdhg-anchor-parameterization}
    \zeta_t=
    \begin{cases}
        \displaystyle\widetilde\zeta_t
        \frac{\left\lVert z_t-z_{t-1}\right\rVert_2}
        {\left\lVert z_\star-z_t\right\rVert_2},&z_\star\neq z_t,\\[6pt]
        0,&z_\star=z_t.
    \end{cases}
\end{equation}
The zero case is handled explicitly before division.
For either case, if $z_t\to\bar z$, then
\begin{equation}
    \left\lVert\zeta_t\left(z_\star-z_t\right)\right\rVert_2
    \leq C_\zeta\left\lVert z_t-z_{t-1}\right\rVert_2\longrightarrow0.
\end{equation}
Thus the anchor correction vanishes as required by Proposition~\ref{prop:ga-pdhg-optimality}, independently of the restart pattern.
Only the anchor correction must vanish; the effective coefficient $\zeta_t$ need not itself be bounded.

\paragraph{Alternative Halpern-style parameterization.}

Using the epoch counter $s$ from Section~\ref{sec:generalized-acceleration}, an alternative parameterization is
\begin{equation}
\label{eq:ga-pdhg-halpern-parameterization}
    \zeta_t=\min\left\{1,\frac{\widetilde\zeta_t}{s+1}\right\},
    \qquad0\leq\zeta_t\leq\min\left\{1,\frac{C_\zeta}{s+1}\right\}.
\end{equation}
The associated relaxation coefficients
\begin{equation}
    \alpha_t^x=\exp\left(u_t^{\alpha^x}\right)\left(1-\zeta_t\right),
    \qquad\alpha_t^y=\exp\left(u_t^{\alpha^y}\right)\left(1-\zeta_t\right)
\end{equation}
are nonnegative, satisfying the proposition's relaxation lower bound with $\delta=1$.
Suppose $z_t\to\bar z$.
If only finitely many restarts occur, then $z_\star$ is eventually fixed and $s\to\infty$, so
\begin{equation}
    \left\lVert\zeta_t\left(z_\star-z_t\right)\right\rVert_2
    \leq\frac{C_\zeta}{s+1}\left\lVert z_\star-z_t\right\rVert_2
    \longrightarrow0.
\end{equation}
If infinitely many restarts occur, the iteration at which the current anchor was set tends to infinity.
Since each new anchor is an output iterate, $z_\star\to\bar z$.
Using $0\leq\zeta_t\leq1$ gives
\begin{equation}
    \left\lVert\zeta_t\left(z_\star-z_t\right)\right\rVert_2
    \leq\left\lVert z_\star-z_t\right\rVert_2\longrightarrow0.
\end{equation}
Thus the anchor correction vanishes as required by Proposition~\ref{prop:ga-pdhg-optimality} for either restart pattern.

During training, Gaussian exploration has unbounded support, so the uniform bounds on $\boldsymbol\beta_t$ and $\widetilde\zeta_t$ established here do not extend automatically to an infinite sampled trajectory.
Proposition~\ref{prop:ga-pdhg-optimality} applies to any trajectory satisfying its hypotheses, whether its controls are deterministic or random.

\subsection{Coefficient mappings for classical acceleration}
\label{app:ga-pdhg-specializations}

Fix $\omega,\eta>0$ and write $\mathcal H=\mathcal H\left(\cdot;\omega,\eta\right)$.
Within an uninterrupted run, write $u_s:=\hat z_t$ and $v_s:=z_t=\mathcal H\left(u_s\right)$ at local iteration $s$.
The following mappings concern the acceleration recurrence with each method's standard initialization, independently of the rollout initialization and restart branch in Algorithm~\ref{alg:gen-pdhg-rollout}.

\paragraph{Halpern and reflected Halpern.}
Set $u_0=z_\star$ and $\boldsymbol\beta_t=0$.
Substituting the first two rows of Table~\ref{tab:ga-pdhg-specializations} into \Eqref{eq:gen_acc_iter} gives, respectively,
\begin{equation}
\begin{aligned}
u_{s+1}&=\frac{s+1}{s+2}\mathcal H\left(u_s\right)+\frac{1}{s+2}u_0,\\
u_{s+1}&=\frac{s+1}{s+2}\left(2\mathcal H\left(u_s\right)-u_s\right)+\frac{1}{s+2}u_0.
\end{aligned}
\end{equation}
These are Halpern iteration on $\mathcal H$ and on its reflection $2\mathcal H-I$, respectively~\citep{halpern1967fixed,lu2024halpern}.

\paragraph{Krasnosel'ski\u\i--Mann.}
For constant $0<\lambda<2$, setting $\alpha_t^x=\alpha_t^y=\lambda-1$, $\boldsymbol\beta_t=0$, and $\zeta_t=0$ gives
\begin{equation}
u_{s+1}=\left(1-\lambda\right)u_s+\lambda\mathcal H\left(u_s\right).
\end{equation}
This is the KM relaxation of $\mathcal H$, with overrelaxation when $\lambda>1$~\citep{bot2023fastkm}.

\paragraph{Fast KM.}
For $\nu>2$, $k\geq1$, and $s\geq1$, the final row of Table~\ref{tab:ga-pdhg-specializations} gives
\begin{equation}
\begin{aligned}
u_{s+1}
&=v_s+\left(\frac{\nu}{2\left(s+\nu\right)}-1\right)\left(v_s-u_s\right)
  +\frac{s}{s+\nu}\left(v_s-v_{s-1}\right)\\
&=\left(1-\frac{\nu}{2\left(s+\nu\right)}\right)u_s
  +\frac{\nu}{2\left(s+\nu\right)}\mathcal H\left(u_s\right)
  +\frac{s}{s+\nu}\left(\mathcal H\left(u_s\right)-\mathcal H\left(u_{s-1}\right)\right).
\end{aligned}
\end{equation}
This is equation~(2.8) of \citet{bot2023fastkm}, with their damping parameter $\alpha$ replaced by $\nu$ and their step parameter set to one.
Initialize $u_0,u_1$ and store $v_0=\mathcal H\left(u_0\right)$ and $v_1=\mathcal H\left(u_1\right)$.
The general Fast KM recurrence with a nonunit step also contains an input-difference term, so the table specifies the unit-step variant.

\paragraph{Anderson coefficients.}
The vector $\boldsymbol\beta_t^{\mathrm{AA}}\in\mathbb R^k$ specifies how much of each past output difference to add to the current PDHG output $z_t$.
Anderson acceleration chooses these coefficients by using past changes in the fixed-point residual to approximately cancel the current residual.
For the PDHG input--output pairs $\left(\hat z_i,z_i\right)$, where $z_i=\mathcal H\left(\hat z_i\right)$, define
\begin{equation}
d_t:=z_t-\hat z_t,\qquad
\mathbf F_t:=\begin{bmatrix}d_t-d_{t-1}&\cdots&d_{t-k+1}-d_{t-k}\end{bmatrix}.
\end{equation}
The residual $d_t$ vanishes exactly when $\hat z_t$ is a fixed point of $\mathcal H$.
The columns of $\mathbf F_t$ record residual changes over the same history as the output differences in $\mathbf K_t$.
For trial coefficients $b\in\mathbb R^k$, the residual model $d_t+\mathbf F_t b$ combines the current residual with these past changes.
Undamped type-II Anderson acceleration chooses $b$ so that $\mathbf F_t b$ cancels $d_t$ as closely as possible in the weighted norm $\lVert v\rVert_Q^2=v^\top Qv$.
Here, $Q\succ0$ is the metric in \Eqref{eq:pdhg-metric}, with $\eta\lVert A\rVert_2<1$.
Using the available history columns and fixing all unavailable coefficients to zero, this gives
\begin{equation}
\label{eq:ga-pdhg-aa-coefficients}
\boldsymbol\beta_t^{\mathrm{AA}}\in
\operatorname*{arg\,min}_{b\in\mathbb R^k}\lVert d_t+\mathbf F_t b\rVert_Q^2.
\end{equation}
The optimization variables are the history coefficients, and the objective is the squared norm of this residual model, not the exact residual at the next iterate.
Apply the fitted coefficient of each residual difference to the matching output difference to form the next PDHG input,
\begin{equation}
\label{eq:ga-pdhg-aa-update}
\hat z_{t+1}=z_t+\mathbf K_t\boldsymbol\beta_t^{\mathrm{AA}}.
\end{equation}
This is the undamped Anderson update in difference form~\citep{anderson1965iterative}, recovered from \Eqref{eq:gen_acc_iter} by setting $\alpha_t^x=\alpha_t^y=\zeta_t=0$.
For a full window and $b=\boldsymbol\beta_t^{\mathrm{AA}}$, both the residual model and the update use the affine weights $1+b_1$, $b_{j+1}-b_j$ for $1\leq j<k$, and $-b_k$ on the current and past residuals or outputs, respectively.
These weights sum to one.

Table~\ref{tab:ga-pdhg-specializations} describes the algebraic GA-PDHG family.
The exponential parameterization in Appendix~\ref{app:controller-policy} restricts the learned extrapolation coefficients to positive values, so it does not implement the negative coefficients of underrelaxed KM or the displayed Fast KM specialization.

\section{Details on PDHG-based LP solver enhancements}
\label{app:pdlp-enhancements}

This appendix gives the enhancement rules summarized in Section~\ref{sec:pdlp-acceleration}.
Table~\ref{tab:pdlp-variants} compares these enhancements across representative solvers.
The four subsections follow the table columns.

\begin{table}[!htbp]
\centering
\small
\setlength{\tabcolsep}{3pt}
\renewcommand{\arraystretch}{1.15}
\caption{Representative PDHG-based LP solver papers and their enhancement techniques.}
\label{tab:pdlp-variants}
\begin{tabularx}{\linewidth}{@{}>{\raggedright\arraybackslash}p{0.34\linewidth}>{\centering\arraybackslash}p{0.10\linewidth}>{\centering\arraybackslash}p{0.11\linewidth}>{\centering\arraybackslash}p{0.17\linewidth}>{\raggedright\arraybackslash}X@{}}
\toprule
Paper / solver & Step size & Primal weight & Restarting criteria & Acceleration method\\
\midrule
PDLP~\citep{applegate2021practical}
    & Adaptive & EMA & Normalized\newline duality gap & Averaging\\
raPDHG~\citep{applegate2023faster}
    & Constant & Constant & Normalized\newline duality gap & Averaging\\
cuPDLP-C~\citep{lu2023cupdlpc} & Adaptive & EMA & KKT error & Averaging\\
cuPDLP.jl~\citep{lu2023cupdlp} & Adaptive & EMA & KKT error & Averaging\\
rHPDHG / r$^2$HPDHG\newline \citep{lu2024halpern}
    & Adaptive & EMA & Fixed-point\newline residual & Halpern /\newline Reflected Halpern\\
MPAX (LP)~\citep{lu2024mpax}
    & Adaptive & EMA & Fixed-point\newline residual & Reflected Halpern\\
cuPDLPx~\citep{lu2025cupdlpx}
    & Constant & PID & Fixed-point\newline residual & Reflected Halpern\\
AA-PDHG \citep{zhou2025anderson}
    & Constant & $^\dagger$EMA & N/A & Anderson \\
\bottomrule
\end{tabularx}
\par\smallskip
\begin{minipage}{\linewidth}
\footnotesize
EMA denotes exponential averaging in log space.
PID denotes PID controller.
N/A denotes no epoch restart.
Paired acceleration entries follow the order of the variants.
$^\dagger$AA-PDHG uses periodic EMA and resets the Anderson history at weight updates.
\end{minipage}
\end{table}

\subsection{Step size}
\label{app:pdlp-step-size}
raPDHG, cuPDLPx, and AA-PDHG use a constant step size $\eta_t\equiv\eta$.
The condition $\eta\lVert A\rVert_2<1$ ensures nonexpansiveness of the PDHG map in the induced norm (Appendix~\ref{app:pdhg-fixed-point}).
Choosing a constant step size from this condition can be conservative~\citep{applegate2021practical}, because it must hold for all possible updates, even when the current update permits a larger step.
PDLP, cuPDLP.jl/C, MPAX, and rHPDHG/r$^2$HPDHG instead choose the step size $\eta_t$ adaptively from trial updates.
Concretely, for a trial update $z_{t+1}=\mathcal H\left(\hat z_{t+1};\omega_t,\eta_t\right)$, the step size acceptance test verifies
\begin{equation}
\label{eq:pdlp-stepsize-test}
    \eta_t\leq
    \frac{\lVert z_{t+1} - \hat z_{t+1}\rVert_{\omega_t}^2}
    {2\left|\left(y_{t+1}-\hat y_{t+1}\right)^\top A\left(x_{t+1}-\hat x_{t+1}\right)\right|}, \quad \text{where} \quad \lVert z\rVert_\omega^2:=\omega\lVert x\rVert_2^2+\omega^{-1}\lVert y\rVert_2^2.
\end{equation}
If the test fails, it reduces $\eta_t$ and recomputes $z_{t+1}$ until the condition holds.
Checking only the current trial can permit steps beyond the sufficient spectral-norm bound.
Rejected trials incur extra matrix--vector products.
Although the test is motivated by the nonexpansiveness proof of the PDHG map, to the best of our knowledge, no convergence guarantee has been established for the adaptive step-size rule used in practical PDLP-style solvers~\citep{applegate2021practical,lu2024halpern}.

\subsection{Primal weight}
\label{app:pdlp-primal-weight}
The primal weight adjusts the relative primal and dual step sizes to balance the scales of the primal and dual variables.
The convergence of standard PDHG is proven for a constant $\omega_t\equiv\omega>0$ and a constant step size $\eta<1/\lVert A\rVert_2$~\citep{chambolle2011first,applegate2023faster}.
Adaptive primal-weight updates are not covered by this fixed-weight convergence guarantee.
In the hand-designed adaptive schemes reviewed here, the primal weight is updated at the end of each block of iterations, with block boundaries set by restarts or fixed update intervals. It remains constant within each block.
Superscript $j$ indexes blocks, subscript $t$ indexes individual iterations, and $\omega^j>0$ is the weight in block $j$.
Let $\left(x^j,y^j\right)$ denote the starting point of block $j$. The displacement norms between successive block starting points are
\begin{equation}
\label{eq:pdlp-weight-displacements}
    d_x^j:=\left\lVert x^{j+1}-x^j\right\rVert_2,
    \qquad d_y^j:=\left\lVert y^{j+1}-y^j\right\rVert_2.
\end{equation}
For $d_x^j,d_y^j$, define the log-balance error
\begin{equation}
\label{eq:pdlp-weight-balance}
    b^j:=\log\left(\frac{\omega^j d^j_{x}}{d^j_{y}}\right).
\end{equation}
PDLP, cuPDLP-C/jl, rHPDHG/r$^2$HPDHG, MPAX, and AA-PDHG use exponential moving averaging (EMA) in log space, whereas cuPDLPx uses proportional--integral--derivative (PID) control.
At the end of block $j$, the weight for the next block is computed as
\begin{equation}
\label{eq:pdlp-weight-background}
    \log\omega^{j+1}=\log\omega^j-
    \begin{cases}
        \theta_\omega b^j,&\text{EMA},\\
        K_p b^j+K_i\displaystyle\sum_{\ell=0}^{j}b^\ell+K_d\left(b^j-b^{j-1}\right),&\text{PID},
    \end{cases}
\end{equation}
where $\theta_\omega\in[0,1]$ is the smoothing coefficient, and $K_p,K_i,K_d$ are PID gains.
Both rules implement feedback control to balance the weighted displacements $\sqrt{\omega^j}d_x^j$ and $d_y^j/\sqrt{\omega^j}$, corresponding to $b^j=0$.
EMA changes $\log\omega^j$ by $-\theta_\omega b^j$, so the correction is proportional to the current error.
PID control uses past errors through its integral and derivative terms~\citep{lu2025cupdlpx}.

\subsection{Restarting}
\label{app:pdlp-restarting}
Restarting reinitializes PDHG from a selected point $z_\star$ and resets its past averaging or acceleration state.
When restarting is combined with averaging or Halpern acceleration, suitable parameter choices and restart rules exploit LP sharpness to achieve accelerated linear convergence~\citep{applegate2023faster,lu2024halpern}.
Restart timing affects both the theoretical iteration complexity and the practical convergence of restarted PDHG.
A fixed-frequency scheme restarts after a prescribed number of iterations.
In practice, the restarted solvers in Table~\ref{tab:pdlp-variants} use adaptive restart heuristics based on the conditions
\begin{subequations}
\label{eq:pdlp_restart_condition}
\begin{align}
    & M_t \leq \beta^{\mathrm{suf}} M_\star
        && \text{(sufficient progress)}, \label{eq:sufficient_progress} \\
    & M_t \leq \beta^{\mathrm{nec}} M_\star
      \quad\text{and}\quad M_t > M_{t-1}
        && \text{(loss of progress)}, \label{eq:loss_of_progress} \\
    & s+1 \geq \beta^{\mathrm{art}}t
        && \text{(long iterations after restart)}, \label{eq:long_epoch}
\end{align}
\end{subequations}
Here, $M_t := M(z_t)$ is a restart metric evaluated at iterate $z_t$, and $M_\star := M(z_\star)$ is its value at the restart anchor $z_\star$.  In \Eqref{eq:long_epoch}, $s$ is the zero-based index within the current epoch, so $s+1$ counts its completed PDHG steps, while $t\geq1$ counts all completed PDHG steps.
These conditions are checked at a prescribed frequency using thresholds satisfying $0 < \beta^{\mathrm{suf}} < \beta^{\mathrm{nec}} < 1$ and $\beta^{\mathrm{art}} \in \left(0,1\right)$.
The metric $M$, manually tuned constant thresholds ($\beta^{\text{suf}}$, $\beta^{\text{nec}}$, $\beta^{\text{art}}$), and the condition-checking intervals differ across solvers, as do initialization rules such as more frequent checks during the first few iterations.

\subsection{Acceleration}
\label{app:pdlp-acceleration}
An acceleration step constructs the next PDHG input or a restart candidate from the current PDHG update and past iterates.
The solvers in Table~\ref{tab:pdlp-variants} use \textit{Averaging}, \textit{Halpern}, or \textit{Anderson} accelerations.

\paragraph{Averaging.}
Averaging uses the step-size weighted average
\[
    \bar z_t=\frac{\sum_{i=t-s}^{t}\eta_{i-1}z_i}{\sum_{i=t-s}^{t}\eta_{i-1}}
\]
as the restart candidate.
When primal--dual iterates oscillate around a solution, averaging can cancel deviations in opposite directions.
Restarted averaging with a fixed step size $\eta$ and primal weight $\omega$ exploits LP sharpness to achieve accelerated linear convergence under the prescribed restart rules based on the normalized duality gap or KKT error~\citep{applegate2023faster,lu2023cupdlp}.

\paragraph{Halpern acceleration.}
Halpern acceleration combines the current PDHG input $\hat z_t$, output $z_t$, and fixed restart anchor $z_\star$ to form the next PDHG input
\begin{equation}
\label{eq:pdlp-halpern-background}
    \hat z_{t+1} = z_t + \frac{\alpha (s+1)}{s+2} \left( z_t-\hat z_t\right) +\frac{1}{s+2}\left(z_\star - z_t\right).
\end{equation}
Here, $\alpha$ is the reflection coefficient, with $\alpha=0$ recovering standard Halpern acceleration and $\alpha=1$ yielding reflected Halpern acceleration~\citep{lu2024halpern}.
The anchor coefficient $1/\left(s+2\right)$ decreases as $s$ increases and returns to $1/2$ at each restart, when $s$ is reset to zero.
For feasible and bounded LPs, both variants achieve an $O\left(1/\left(s+1\right)\right)$ fixed-point residual bound with fixed $\eta,\omega>0$ satisfying $\eta\lVert A\rVert_2<1$.
When combined with the restart schemes analyzed by \citet{lu2024halpern}, they achieve accelerated linear convergence.

\paragraph{Anderson acceleration.}
Anderson acceleration corrects the current PDHG output using recent output differences~\citep{zhou2025anderson}.
Using the $k$ most recent PDHG output differences, Anderson acceleration takes the form
\begin{equation}
\label{eq:pdlp-anderson-background}
    \hat z_{t+1}=z_t+\mathbf K_t\boldsymbol\beta_t^{\mathrm{AA}},\qquad
    \mathbf K_t:=\begin{bmatrix}z_t-z_{t-1}&\cdots&z_{t-k+1}-z_{t-k}\end{bmatrix}.
\end{equation}
The coefficients $\boldsymbol\beta_t^{\mathrm{AA}}\in\mathbb R^k$ are computed from recent PDHG input--output pairs $(\hat z_i,z_i)$.
A small least-squares problem selects coefficients that combine past residual changes to approximately cancel the current residual $z_t-\hat z_t$.
The same coefficients are then applied to the output differences to construct the next PDHG input.
The resulting history correction has a quasi-Newton interpretation and can reduce error components that decay slowly under basic PDHG.

These acceleration methods form linear combinations of the current and past iterates.
Averaging uses step-size weights, Halpern acceleration uses coefficients prescribed by convergence analysis, and Anderson acceleration computes coefficients by solving a least-squares problem.

\section{Further Details on Problem Classes}
\label{app:lp-problem-classes}

We describe the transportation, auction, minimum-cost flow, robust inventory management, and $L_\infty$ polynomial regression generators used in Section~\ref{sec:experimental-setup}.
The transportation, auction, and minimum-cost flow generators construct primal--dual solutions satisfying the KKT conditions, so their optimal values are known without an auxiliary solve.
The inventory generator solves each generated LP, while the regression generator also refines the resulting active set, as detailed below.
Nonzero counts refer to the stacked constraint matrix $A$ defined after \Eqref{eq:LP}, with variable bounds stored separately in $X$.
Here $\operatorname{nnz}(A)$ counts the nonzero entries, and matrix density is $\operatorname{nnz}\left(A\right)/(mn)$, the complement of the zero fraction. Symbols for problem-specific data in the following subsections are local to each LP class.

\subsection{Transport}
\label{app:transport}

Balanced transportation allocates mass from $s$ sources to $t$ destinations at minimum cost. In this subsection only, $s,t$ are problem dimensions.
Let $a\in\mathbb{R}_+^s$ and $d\in\mathbb{R}_+^t$ be the supply and demand vectors with $\sum_{i=1}^s a_i=\sum_{j=1}^t d_j$, and let $c_{ij}\in\mathbb{R}$ be the unit cost from source $i$ to destination $j$.
For transported amounts $x_{ij}\geq0$, the LP is
\begin{equation}
\label{eq:transport-lp}
\begin{aligned}
    \min_{x_{ij}\geq0}\quad
    &\sum_{i=1}^s\sum_{j=1}^t c_{ij}x_{ij}\\
    \text{subject to}\quad
    &\sum_{j=1}^t x_{ij}=a_i, \qquad i=1,\ldots,s,\\
    &\sum_{i=1}^s x_{ij}=d_j, \qquad j=1,\ldots,t.
\end{aligned}
\end{equation}
The generator obtains the marginals from a feasible plan supported on a spanning tree of the complete bipartite graph, optionally with additional support arcs.
After vectorization, the LP has $n=st$ variables, $m_e=m=s+t$ equality constraints, and no inequality rows.
Each column of $A=P$ contains two unit entries, one for its source and one for its destination, giving
\begin{equation}
    \operatorname{nnz}\left(A\right)=2st=2n,
    \qquad \frac{\operatorname{nnz}\left(A\right)}{mn}=\frac{2}{s+t}.
\end{equation}
All $st$ transport arcs remain in the LP, so changing the optimal plan's support or introducing cost ties does not change matrix sparsity.
One mass-balance equality is redundant and is retained.

\subsection{Auction}
\label{app:auction}

We use a capacitated LP relaxation of combinatorial auction winner determination, where bids request bundles of items.
For $m$ items and $n$ bids, let $B\in\{0,1\}^{m\times n}$ have $B_{ij}=1$ when bid $j$ includes item $i$, let $v\in\mathbb{R}^n$ contain bid values, and let $d\in\mathbb{R}_+^m$ contain item capacities.
The variable $x_j\in[0,1]$ is the accepted fraction of bid $j$, and the LP is
\begin{equation}
\label{eq:auction-lp}
    \max_{x\in\mathbb{R}^n}\quad v^\top x
    \quad\text{subject to}\quad Bx\leq d,\qquad 0\leq x\leq\mathbf{1}.
\end{equation}
In \Eqref{eq:LP}, this corresponds to $c=-v$, $G=-B$, and $h=-d$, with $m_e=0$ and $m_i=m$.
The generated capacities are not restricted to one.
Each bundle contains $\kappa\in\{1,\ldots,m\}$ distinct items sampled without replacement, so
\begin{equation}
    \operatorname{nnz}\left(A\right)=\kappa n,
    \qquad \frac{\operatorname{nnz}\left(A\right)}{mn}=\frac{\kappa}{m}.
\end{equation}
Bundle size determines column sparsity, while the item-popularity parameter changes row degrees and overlap among bids without changing the nonzero count.
The finite bounds allow both full and partial bid acceptance in the relaxation.

\subsection{Min-cost Flow}
\label{app:min-cost-flow}

Capacitated minimum-cost flow routes a commodity through a directed network while satisfying node balances and arc capacities.
Let $V$ and $E$ be the node and arc sets, with $m=|V|$ and $n=|E|$, and define the incidence matrix $B\in\mathbb{R}^{m\times n}$ to have $-1$ at the tail and $+1$ at the head of each arc.
For arc costs $c\in\mathbb{R}^n$, capacities $u\in\mathbb{R}_{>0}^n$, and net inflows $d\in\mathbb{R}^m$ satisfying $\mathbf{1}^\top d=0$, the flow vector $x\in\mathbb{R}^n$ solves
\begin{equation}
\label{eq:min-cost-flow-lp}
    \min_{x\in\mathbb{R}^n}\quad c^\top x
    \quad\text{subject to}\quad Bx=d,\qquad 0\leq x\leq u.
\end{equation}
The generator randomly orients a spanning tree and adds distinct directed arcs without self-loops, ensuring that the underlying undirected graph is connected.
Here $A=P=B$, $m_e=m$, and $m_i=0$, and each column contains exactly two nonzeros, giving
\begin{equation}
    \operatorname{nnz}\left(A\right)=2n,
    \qquad \frac{\operatorname{nnz}\left(A\right)}{mn}=\frac{2}{m}.
\end{equation}
Adding arcs increases row degrees and the total nonzero count, while column sparsity remains fixed.
The incidence matrix has rank $m-1$, and the redundant conservation row is retained.
Capacities affect which box constraints are active without changing matrix sparsity.

\subsection{Robust Inventory Management}
\label{app:robust-inventory}

We consider $N$ items over $T$ periods with uncertain demand and shared purchasing and storage limits.
For item $i=1,\ldots,N$ and period $t=1,\ldots,T$, let $x_t^{(i)}$, $u_t^{(i)}$, and $b_t^{(i)}$ denote inventory, purchases, and a nonnegative shortage buffer.
The initial inventory $x_1^{(i)}$ is given.
The nonnegative parameters $h_t^{(i)}$, $c_t^{(i)}$, $p_t^{(i)}$, $s_t^{(i)}$, and $v^{(i)}$ are the holding cost, purchase cost, selling price, shortage penalty, and unit volume.
Let $C$ and $V$ be the per-period capital budget and storage capacity.
Demand lies in a box, and purchases follow an itemwise causal affine policy,
\begin{equation}
\label{eq:inventory-policy-box}
    \mathcal D=\prod_{i=1}^N\prod_{t=1}^T
    \left[\underline d_t^{(i)},\bar d_t^{(i)}\right],
    \qquad
    u_t^{(i)}=\hat u_t^{(i)}+\sum_{\tau<t}k_{t,\tau}^{(i)}d_\tau^{(i)}.
\end{equation}
The nominal purchases $\hat u_t^{(i)}$, feedback gains $k_{t,\tau}^{(i)}$, and buffers $b_t^{(i)}$ are chosen before demand is observed.
Only purchases adapt to past demand, with no feedback between items.
The underlying robust problem is
\begin{equation}
\label{eq:inventory-robust}
\begin{aligned}
    \min_{\hat u,k,\,b\geq0}\ \max_{d\in\mathcal D}\quad
    &\sum_{i=1}^N\sum_{t=1}^T
    \left[h_t^{(i)}x_t^{(i)}+c_t^{(i)}u_t^{(i)}
    -p_t^{(i)}d_t^{(i)}+\left(p_t^{(i)}+s_t^{(i)}\right)b_t^{(i)}\right]\\
    \text{subject to}\quad
    &x_{t+1}^{(i)}=x_t^{(i)}+u_t^{(i)}-d_t^{(i)}+b_t^{(i)},
    \qquad i=1,\ldots,N,\quad t=1,\ldots,T,\\
    &x_t^{(i)}\geq0,\quad u_t^{(i)}\geq0,\quad
    \sum_i v^{(i)}x_t^{(i)}\leq V,\quad
    \sum_i c_t^{(i)}u_t^{(i)}\leq C,
    \qquad \forall d\in\mathcal D.
\end{aligned}
\end{equation}
The last line applies for $t=1,\ldots,T$ and, for the itemwise inequalities, every $i$.
Holding costs are charged on $x_t$, the inventory entering period $t$.
The generated problem has no terminal constraint on $x_{T+1}$.
The buffer $b_t$ is a deterministic feasibility slack, rather than the realized positive-part shortage, and has no demand-dependent upper bound.

\paragraph{LP formulation.}
Define the demand-independent inventory $a_t^{(i)}$ and demand coefficients $\beta_{t,\tau}^{(i)}$ by
\begin{equation}
\label{eq:inventory-state-coefficients}
\begin{aligned}
    a_t^{(i)}&=x_1^{(i)}+\sum_{r<t}\left(\hat u_r^{(i)}+b_r^{(i)}\right),
    &\beta_{t,\tau}^{(i)}&=\sum_{r=\tau+1}^{t-1}k_{r,\tau}^{(i)}-1,\quad \tau<t,\\
    x_t^{(i)}&=a_t^{(i)}+\sum_{\tau<t}\beta_{t,\tau}^{(i)}d_\tau^{(i)}.
\end{aligned}
\end{equation}
Empty sums are zero.
The total cost is $\Gamma_0+\sum_{i,\tau}\gamma_\tau^{(i)}d_\tau^{(i)}$, where
\begin{equation}
\label{eq:inventory-cost-coefficients}
\begin{aligned}
    \Gamma_0&=\sum_{i=1}^N\sum_{t=1}^T
    \left[h_t^{(i)}a_t^{(i)}+c_t^{(i)}\hat u_t^{(i)}
    +\left(p_t^{(i)}+s_t^{(i)}\right)b_t^{(i)}\right],\\
    \gamma_\tau^{(i)}&=\sum_{t=\tau+1}^T
    \left[h_t^{(i)}\beta_{t,\tau}^{(i)}+c_t^{(i)}k_{t,\tau}^{(i)}\right]-p_\tau^{(i)}.
\end{aligned}
\end{equation}
For each coefficient $q\in\{k_{t,\tau}^{(i)},\beta_{t,\tau}^{(i)},\gamma_\tau^{(i)}\}$, introduce $q^+,q^-\geq0$ with $q=q^+-q^-$.
For complementary splits satisfying $q^+q^-=0$, the minimum and maximum of $qd_\tau^{(i)}$ over its interval are, respectively,
\begin{equation}
\label{eq:inventory-box-extrema}
    L_\tau^{(i)}\left(q^\pm\right)
    =\underline d_\tau^{(i)}q^+-\bar d_\tau^{(i)}q^- ,
    \qquad
    U_\tau^{(i)}\left(q^\pm\right)
    =\bar d_\tau^{(i)}q^+-\underline d_\tau^{(i)}q^- .
\end{equation}
Let $\mathcal R$ be the sum of all split variables,
\begin{equation}
\label{eq:inventory-split-penalty}
    \mathcal R=\sum_{i=1}^N
    \left[\sum_{\tau<t}\left(k_{t,\tau}^{(i),+}+k_{t,\tau}^{(i),-}
    +\beta_{t,\tau}^{(i),+}+\beta_{t,\tau}^{(i),-}\right)
    +\sum_{\tau=1}^T\left(\gamma_\tau^{(i),+}+\gamma_\tau^{(i),-}\right)\right].
\end{equation}
With a free epigraph variable $\eta$, the generated LP is
\begin{equation}
\label{eq:inventory-lp}
\begin{aligned}
    \min_{\eta,\hat u,b,k^\pm,\beta^\pm,\gamma^\pm}\quad
    &\eta+\varepsilon\mathcal R\\
    \text{subject to}\quad
    &\Gamma_0+\sum_{i=1}^N\sum_{\tau=1}^T
    U_\tau^{(i)}\left(\gamma_\tau^{(i),\pm}\right)\leq\eta,\\
    &\sum_{i=1}^Nc_t^{(i)}\left[\hat u_t^{(i)}+
    \sum_{\tau<t}U_\tau^{(i)}\left(k_{t,\tau}^{(i),\pm}\right)\right]\leq C,
    &&t=1,\ldots,T,\\
    &\sum_{i=1}^Nv^{(i)}\left[a_t^{(i)}+
    \sum_{\tau<t}U_\tau^{(i)}\left(\beta_{t,\tau}^{(i),\pm}\right)\right]\leq V,
    &&t=2,\ldots,T,\\
    &\hat u_t^{(i)}+\sum_{\tau<t}L_\tau^{(i)}\left(k_{t,\tau}^{(i),\pm}\right)\geq0,
    &&i=1,\ldots,N,\quad t=1,\ldots,T,\\
    &a_t^{(i)}+\sum_{\tau<t}L_\tau^{(i)}\left(\beta_{t,\tau}^{(i),\pm}\right)\geq0,
    &&i=1,\ldots,N,\quad t=2,\ldots,T,\\
    &b,\ k^\pm,\ \beta^\pm,\ \gamma^\pm\geq0.
\end{aligned}
\end{equation}
Here $a_t$, $\Gamma_0$, $k$, $\beta$, and $\gamma$ are substituted using their definitions above.
The definitions of $\beta$ and $\gamma$ impose linear equalities on the split variables.
The nominal purchases $\hat u$ are unrestricted.
Removing the common part of any split pair preserves these equalities and weakens every inequality, so complementary splits suffice without explicit complementarity constraints.
Thus $\varepsilon=0$ gives an exact LP reformulation of \Eqref{eq:inventory-robust}.
The benchmark uses $\varepsilon=10^{-6}$ and includes this finite regularization in its objective.

\paragraph{Generated instances.}
We use $T=10$, $x_1^{(i)}=0$, $v^{(i)}=1$, $c_t^{(i)}=0.5$, $h_t^{(i)}=0.3$, $s_t^{(i)}=0.1$, $C=0.25N$, and $V=0.155N$.
Selling prices are sampled independently from $\mathcal U[1,2]$.
For $\mu_t=0.25\left[1+\cos\left(2\pi t/7\right)\right]$, demand bounds are $\underline d_t^{(i)}=\max\{0,\mu_t-0.075\}$ and $\bar d_t^{(i)}=\mu_t+0.075$.
The generator checks the constant initial-inventory constraints once and omits their rows.
With variable bounds stored separately, the LP has
\begin{equation}
    n=1+2NT\left(T+1\right),\qquad
    m_e=\frac{NT\left(T+1\right)}{2},\qquad
    m=2T+N\left[\frac{T\left(T+5\right)}{2}-1\right].
\end{equation}
At the stated settings, $\operatorname{nnz}\left(A\right)=1075N+1$.
Each instance is solved with HiGHS to obtain a primal--dual reference solution.

\subsection{\texorpdfstring{$L_\infty$}{L-infinity} Polynomial Regression}
\label{app:linf-regression}

$L_\infty$ polynomial regression minimizes the largest absolute residual over a finite set of observations.
This criterion is also called minimax or Chebyshev regression and differs from least-squares regression, which minimizes the sum of squared residuals.
Let $\left\{\left(t_i,y_i\right)\right\}_{i=1}^N$ be observations with $t_i\in[-1,1]$, and let $\left\{\phi_j\right\}_{j=1}^p$ be a basis for polynomials of degree at most $p-1$.
Write $y=\left(y_1,\ldots,y_N\right)$ and define the design matrix $\Phi\in\mathbb{R}^{N\times p}$ by $\Phi_{ij}=\phi_j\left(t_i\right)$.
The coefficient vector $\beta=\left(\beta_1,\ldots,\beta_p\right)\in\mathbb{R}^p$ solves
\begin{equation}
\label{eq:linf-regression}
    \min_{\beta\in\mathbb{R}^p}
    \left\lVert\Phi\beta-y\right\rVert_\infty
    =
    \min_{\beta\in\mathbb{R}^p}
    \max_{i=1,\ldots,N}
    \left|\sum_{j=1}^{p}\beta_j\phi_j\left(t_i\right)-y_i\right|.
\end{equation}
Introducing an epigraph variable $\tau\geq0$ gives the LP, where $\mathbf{1}\in\mathbb{R}^N$ denotes the all-ones vector.
\begin{equation}
\label{eq:linf-regression-epigraph}
\begin{aligned}
    \min_{\beta\in\mathbb{R}^p,\,\tau\geq0}\quad &\tau\\
    \text{subject to}\quad
    &-\tau\mathbf{1}\leq\Phi\beta-y\leq\tau\mathbf{1}.
\end{aligned}
\end{equation}
For the convention $Gx\geq h$ in \Eqref{eq:LP}, set $x=\left(\beta,\tau\right)$ and
\begin{equation}
\label{eq:linf-regression-standard-form}
    G=
    \begin{bmatrix}
        \Phi & \mathbf{1}\\
        -\Phi & \mathbf{1}
    \end{bmatrix},\qquad
    h=
    \begin{bmatrix}
        y\\
        -y
    \end{bmatrix},\qquad
    c=
    \begin{bmatrix}
        \mathbf{0}_p\\
        1
    \end{bmatrix}.
\end{equation}
The coefficients $\beta$ are free variables, whereas only $\tau$ has the lower bound zero.
There are no equality constraints or finite upper bounds.
Thus the generated LP has $n=p+1$ variables and $m_i=m=2N$ inequality rows.
Because $P$ is empty, the stacked matrix is $A=G$.
For continuously sampled nodes, this matrix is dense almost surely, with $\operatorname{nnz}\left(A\right)=2N\left(p+1\right)=mn$.

Let $T_k$ be the degree-$k$ Chebyshev polynomial of the first kind, defined by $T_k\left(\cos\theta\right)=\cos\left(k\theta\right)$.
For an excess degree $r\geq1$ and noise scale $\sigma\geq0$, the generator samples a target function in the Chebyshev basis and adds Gaussian observation noise.
\begin{equation}
\label{eq:linf-regression-data}
    y_i=\sum_{k=0}^{p+r-1}a_kT_k\left(t_i\right)+\sigma\epsilon_i,
    a_k,\epsilon_i\overset{\mathrm{i.i.d.}}{\sim}\mathcal{N}\left(0,1\right),
\end{equation}
We use $r=3$ and $\sigma=0.1$.
The nodes are sampled uniformly on $[-1,1]$ by default.
The generator also supports the Chebyshev nodes $t_i=\cos\left(\left(2i-1\right)\pi/\left(2N\right)\right)$ and center-clustered nodes $t_i=u_i\left|u_i\right|$ for $u_i\sim\operatorname{Unif}\left[-1,1\right]$.

The fitted basis is either the monomial basis $\phi_j\left(t\right)=t^{j-1}$ or the Chebyshev basis $\phi_j\left(t\right)=T_{j-1}\left(t\right)$.
Both bases span the same degree-$(p-1)$ polynomial space, so paired instances with the same nodes and observations have the same optimal residual $\tau^\star$ in exact arithmetic.
Their numerical conditioning can differ substantially.
The monomial design is a Vandermonde matrix and can become ill-conditioned as the degree increases, while the Chebyshev representation is typically better conditioned on $[-1,1]$.
The basis choice therefore changes the LP representation without changing the regression function class.
Unlike the preceding planted generators, we obtain the reference primal--dual solution by solving \Eqref{eq:linf-regression-epigraph} with the HiGHS dual-simplex method and refining the active constraints by least squares.

\subsection{Training set construction}
\label{app:protocol}

We generate a fixed training set for each family using the sizes and instance counts in Tables~\ref{tab:size-ladders-a} and~\ref{tab:size-ladders-b}.
Each instance is generated from its bucket seed plus its within-bucket index, and the same LPs are reused across PPO iterations.
The training sets contain 224 transport, 448 auction, 224 min-cost flow, 256 robust inventory, 512 regression, and 384 random LP instances.
We train one policy per family, select one checkpoint by held-out evaluation, and compare it on the same four unseen LPs per test size as both baselines.
Tables~\ref{tab:size-ladders-a} and~\ref{tab:size-ladders-b} list the training, validation, and test sizes, whose instances use disjoint seed ranges.
Training counts denote LPs per PPO rollout, held-out counts denote LPs per evaluation, and compared counts denote LPs per solver.
Size multipliers are relative to the largest training size and measured in $n$, except for inventory (item count $N$) and regression ($m$ at fixed $n=9$).
For inventory, $m=20+74N$ and $n=1+220N$ at a fixed horizon of 10 periods.

\begin{table}[!htbp]
\centering
\footnotesize
\setlength{\tabcolsep}{4pt}
\renewcommand{\arraystretch}{1.05}
\caption{Training, validation, and test sizes for transport, auction, and min-cost flow.}
\label{tab:size-ladders-a}
\begin{tabular}{@{}lrrrrrr@{}}
\toprule
Family & Multiplier & $m$ & $n$ & $\operatorname{nnz}(A)$ & \multicolumn{2}{c}{LPs per split}\\
\cmidrule(l){6-7}
 & & & & & Train / Held-out & Compared\\
\midrule
Transport & 0.125\,$\times$ & 120 & 3,200 & 6,400 & 64 / -- & --\\
 & 0.25\,$\times$ & 160 & 6,400 & 12,800 & 64 / 16 & 4\\
 & 0.5\,$\times$ & 240 & 12,800 & 25,600 & 64 / -- & --\\
 & 1\,$\times$ & 320 & 25,600 & 51,200 & 32 / 16 & 4\\
 & 2.5\,$\times$ & 506 & 64,000 & 128,000 & -- / 8 & 4\\
 & 5\,$\times$ & 720 & 128,000 & 256,000 & -- / 8 & 4\\
 & 10\,$\times$ & 1,012 & 256,000 & 512,000 & -- / 4 & 4\\
 & 12\,$\times$ & 1,112 & 307,200 & 614,400 & -- / -- & 4\\
 & 14\,$\times$ & 1,200 & 358,400 & 716,800 & -- / -- & 4\\
 & 16\,$\times$ & 1,280 & 409,600 & 819,200 & -- / -- & 4\\
 & 25\,$\times$ & 1,600 & 640,000 & 1,280,000 & -- / -- & 4\\
 & 36\,$\times$ & 1,920 & 921,600 & 1,843,200 & -- / -- & 4\\
 & 64\,$\times$ & 2,560 & 1,638,400 & 3,276,800 & -- / -- & 4\\
 & 100\,$\times$ & 3,200 & 2,560,000 & 5,120,000 & -- / -- & 4\\
 & 400\,$\times$ & 6,400 & 10,240,000 & 20,480,000 & -- / -- & 4\\
\addlinespace[2pt]
Auction & 0.125\,$\times$ & 800 & 3,200 & 64,000 & 128 / 16 & --\\
 & 0.25\,$\times$ & 1,600 & 6,400 & 128,000 & 128 / 16 & --\\
 & 0.5\,$\times$ & 3,200 & 12,800 & 256,000 & 128 / 16 & --\\
 & 1\,$\times$ & 6,400 & 25,600 & 512,000 & 64 / 16 & 4\\
 & 2.5\,$\times$ & 16,000 & 64,000 & 1,280,000 & -- / -- & 4\\
 & 4\,$\times$ & 25,600 & 102,400 & 2,048,000 & -- / 8 & --\\
 & 5\,$\times$ & 32,000 & 128,000 & 2,560,000 & -- / -- & 4\\
 & 10\,$\times$ & 64,000 & 256,000 & 5,120,000 & -- / -- & 4\\
 & 12\,$\times$ & 76,800 & 307,200 & 6,144,000 & -- / -- & 4\\
 & 14\,$\times$ & 89,600 & 358,400 & 7,168,000 & -- / -- & 4\\
\addlinespace[2pt]
Min-cost flow & 0.125\,$\times$ & 600 & 3,600 & 7,200 & 64 / -- & --\\
 & 0.25\,$\times$ & 1,200 & 7,200 & 14,400 & 64 / 16 & --\\
 & 0.5\,$\times$ & 2,400 & 14,400 & 28,800 & 64 / -- & --\\
 & 1\,$\times$ & 4,800 & 28,800 & 57,600 & 32 / 16 & 4\\
 & 2.5\,$\times$ & 12,000 & 72,000 & 144,000 & -- / 8 & 4\\
 & 5\,$\times$ & 24,000 & 144,000 & 288,000 & -- / 8 & 4\\
 & 10\,$\times$ & 48,000 & 288,000 & 576,000 & -- / 4 & 4\\
 & 12\,$\times$ & 57,600 & 345,600 & 691,200 & -- / -- & 4\\
\bottomrule
\end{tabular}
\end{table}

\begin{table}[!htbp]
\centering
\footnotesize
\setlength{\tabcolsep}{4pt}
\renewcommand{\arraystretch}{1.05}
\caption{Training, validation, and test sizes for inventory, $L_\infty$ regression, and random LP.}
\label{tab:size-ladders-b}
\begin{tabular}{@{}lrrrrrr@{}}
\toprule
Family & Multiplier & $m$ & $n$ & $\operatorname{nnz}(A)$ & \multicolumn{2}{c}{LPs per split}\\
\cmidrule(l){6-7}
 & & & & & Train / Held-out & Compared\\
\midrule
Robust inventory & 0.25\,$\times$ & 390 & 1,101 & 5,376 & 64 / -- & 4\\
 & 0.5\,$\times$ & 760 & 2,201 & 10,751 & 128 / 4 & 4\\
 & 1\,$\times$ & 1,500 & 4,401 & 21,501 & 64 / 4 & 4\\
 & 1.5\,$\times$ & 2,240 & 6,601 & 32,251 & -- / 4 & 4\\
 & 2\,$\times$ & 2,980 & 8,801 & 43,001 & -- / 2 & 4\\
 & 3\,$\times$ & 4,460 & 13,201 & 64,501 & -- / 2 & 4\\
 & 5\,$\times$ & 7,420 & 22,001 & 107,501 & -- / 2 & 4\\
 & 8\,$\times$ & 11,860 & 35,201 & 172,001 & -- / -- & 4\\
 & 10\,$\times$ & 14,820 & 44,001 & 215,001 & -- / -- & 4\\
\addlinespace[2pt]
$L_\infty$ regression & 0.5\,$\times$ & 120 & 9 & 1,080 & 128 / 16 & 4\\
 & 0.67\,$\times$ & 160 & 9 & 1,440 & 128 / 16 & 4\\
 & 0.83\,$\times$ & 200 & 9 & 1,800 & 128 / 16 & 4\\
 & 1\,$\times$ & 240 & 9 & 2,160 & 128 / 16 & 4\\
 & 2\,$\times$ & 480 & 9 & 4,320 & -- / 16 & 4\\
 & 5\,$\times$ & 1,200 & 9 & 10,800 & -- / 16 & 4\\
 & 6\,$\times$ & 1,440 & 9 & 12,960 & -- / -- & 4\\
 & 7\,$\times$ & 1,680 & 9 & 15,120 & -- / -- & 4\\
\addlinespace[2pt]
Random LP & 0.2\,$\times$ & 2,000 & 4,000 & 512,000 & 64 / -- & --\\
 & 0.4\,$\times$ & 4,000 & 8,000 & 1,024,000 & 128 / -- & 4\\
 & 0.8\,$\times$ & 8,000 & 16,000 & 2,048,000 & 128 / 16 & --\\
 & 1\,$\times$ & 10,000 & 20,000 & 2,560,000 & 64 / -- & 4\\
 & 2\,$\times$ & 20,000 & 40,000 & 5,120,000 & -- / 4 & 4\\
 & 3\,$\times$ & 30,000 & 60,000 & 7,680,000 & -- / -- & 4\\
 & 4\,$\times$ & 40,000 & 80,000 & 10,240,000 & -- / -- & 4\\
 & 5\,$\times$ & 50,000 & 100,000 & 12,800,000 & -- / 4 & --\\
 & 6\,$\times$ & 60,000 & 120,000 & 15,360,000 & -- / -- & 4\\
 & 10\,$\times$ & 100,000 & 200,000 & 25,600,000 & -- / 2 & 4\\
 & 15\,$\times$ & 150,000 & 300,000 & 38,400,000 & -- / -- & 4\\
\bottomrule
\end{tabular}
\end{table}
\FloatBarrier

\section{Full Results on Six Problem Classes}
\label{app:full-results}

Tables~\ref{tab:ladder-results-a} and~\ref{tab:ladder-results-b} report the complete size-generalization results for all six LP families.
Each entry is the median over four unseen LPs at the indicated size, evaluated by all three solvers.
Iterations and matrix--vector products are counted to the first iterate satisfying the common relative KKT tolerance $\epsilon_{\mathrm{tol}}=10^{-6}$.
Algorithm wall clock measures execution after compilation and excludes JIT compilation and preconditioning, as in Section~\ref{sec:experimental-setup}.
The dagger marks the largest training size, and a dash denotes failure to reach the tolerance within the iteration limit.

\begin{table}[!htbp]
\centering
\scriptsize
\setlength{\tabcolsep}{3.4pt}
\renewcommand{\arraystretch}{1.02}
\caption{Full test results for transport, auction, and min-cost flow (four-LP medians).}
\label{tab:ladder-results-a}
\begin{tabular}{@{}ll rrr rrr rrr@{}}
\toprule
& & \multicolumn{3}{c}{Iterations to $\epsilon_{\mathrm{tol}}$} & \multicolumn{3}{c}{Matrix--vector products} & \multicolumn{3}{c}{Algorithm wall clock (ms)}\\
\cmidrule(lr){3-5}\cmidrule(lr){6-8}\cmidrule(l){9-11}
Family & $(m, n)$ & Ours & HT & MPAX & Ours & HT & MPAX & Ours & HT & MPAX\\
\midrule
Transport & (160, 6,400) & \textbf{146} & 488 & 550 & \textbf{294} & 979 & 1,158 & \textbf{46} & 119 & 76\\
 & (320, 25,600)$^\dagger$ & \textbf{164} & 633 & 548 & \textbf{329} & 1,268 & 1,150 & \textbf{39} & 107 & 87\\
 & (506, 64,000) & \textbf{155} & 674 & 540 & \textbf{312} & 1,349 & 1,134 & \textbf{46} & 143 & 96\\
 & (720, 128,000) & \textbf{150} & 628 & 526 & \textbf{302} & 1,257 & 1,108 & \textbf{48} & 146 & 122\\
 & (1,012, 256,000) & \textbf{140} & 554 & 518 & \textbf{282} & 1,111 & 1,090 & \textbf{52} & 154 & 156\\
 & (1,112, 307,200) & \textbf{158} & 628 & 528 & \textbf{317} & 1,259 & 1,110 & \textbf{57} & 173 & 170\\
 & (1,200, 358,400) & \textbf{152} & 621 & 527 & \textbf{307} & 1,244 & 1,108 & \textbf{61} & 192 & 188\\
 & (1,280, 409,600) & \textbf{148} & 712 & 524 & \textbf{299} & 1,427 & 1,101 & \textbf{66} & 247 & 203\\
 & (1,600, 640,000) & \textbf{153} & 533 & 536 & \textbf{308} & 1,068 & 1,128 & \textbf{102} & 286 & 308\\
 & (1,920, 921,600) & \textbf{152} & 594 & 538 & \textbf{305} & 1,190 & 1,130 & \textbf{135} & 447 & 445\\
 & (2,560, 1,638,400) & \textbf{149} & 676 & 527 & \textbf{300} & 1,354 & 1,110 & \textbf{248} & 965 & 830\\
 & (3,200, 2,560,000) & \textbf{149} & 608 & 532 & \textbf{300} & 1,217 & 1,120 & \textbf{385} & 1,399 & 1,445\\
 & (6,400, 10,240,000) & \textbf{200} & 570 & 524 & \textbf{403} & 1,142 & 1,105 & \textbf{1,998} & 5,222 & 7,334\\
\addlinespace[2pt]
Auction & (6,400, 25,600)$^\dagger$ & \textbf{190} & 928 & 980 & \textbf{381} & 1,859 & 2,025 & \textbf{70} & 256 & 198\\
 & (16,000, 64,000) & \textbf{190} & 924 & 1,108 & \textbf{382} & 1,851 & 2,284 & \textbf{81} & 293 & 324\\
 & (32,000, 128,000) & \textbf{190} & 923 & 994 & \textbf{382} & 1,848 & 2,055 & \textbf{84} & 316 & 479\\
 & (64,000, 256,000) & \textbf{192} & 918 & 1,044 & \textbf{387} & 1,838 & 2,158 & \textbf{114} & 433 & 895\\
 & (76,800, 307,200) & \textbf{192} & 924 & 1,046 & \textbf{387} & 1,849 & 2,162 & \textbf{118} & 453 & 1,143\\
 & (89,600, 358,400) & \textbf{192} & 926 & 1,066 & \textbf{387} & 1,853 & 2,198 & \textbf{134} & 521 & 1,352\\
\addlinespace[2pt]
Min-cost flow & (4,800, 28,800)$^\dagger$ & \textbf{164} & 302 & 440 & \textbf{329} & 606 & 928 & \textbf{49} & 68 & 76\\
 & (12,000, 72,000) & \textbf{180} & 319 & 448 & \textbf{363} & 640 & 944 & \textbf{63} & 85 & 88\\
 & (24,000, 144,000) & \textbf{234} & 356 & 444 & \textbf{469} & 715 & 939 & \textbf{80} & 94 & 109\\
 & (48,000, 288,000) & \textbf{224} & 374 & 446 & \textbf{451} & 749 & 938 & \textbf{91} & 120 & 139\\
 & (57,600, 345,600) & \textbf{242} & 370 & 482 & \textbf{485} & 741 & 1,010 & \textbf{115} & 142 & 147\\
\bottomrule
\end{tabular}
\par\smallskip
\parbox{\linewidth}{\scriptsize Ours: \textcolor{black}{GALLOP}; HT: GA-PDHG (HT). Bold: best per metric. $^\dagger$: largest training size. A dash denotes failure to reach $\epsilon_{\mathrm{tol}}=10^{-6}$ within the iteration limit.}
\end{table}

\begin{table}[!htbp]
\centering
\scriptsize
\setlength{\tabcolsep}{3.4pt}
\renewcommand{\arraystretch}{1.02}
\caption{Full test results for inventory, $L_\infty$ regression, and random LP (four-LP medians).}
\label{tab:ladder-results-b}
\begin{tabular}{@{}ll rrr rrr rrr@{}}
\toprule
& & \multicolumn{3}{c}{Iterations to $\epsilon_{\mathrm{tol}}$} & \multicolumn{3}{c}{Matrix--vector products} & \multicolumn{3}{c}{Algorithm wall clock (ms)}\\
\cmidrule(lr){3-5}\cmidrule(lr){6-8}\cmidrule(l){9-11}
Family & $(m, n)$ & Ours & HT & MPAX & Ours & HT & MPAX & Ours & HT & MPAX\\
\midrule
Robust inventory & (390, 1,101) & \textbf{1,426} & 2,892 & 3,774 & \textbf{2,853} & 5,787 & 7,682 & \textbf{259} & 397 & 309\\
 & (760, 2,201) & \textbf{1,250} & 4,242 & 3,072 & \textbf{2,503} & 8,485 & 6,256 & \textbf{260} & 659 & 482\\
 & (1,500, 4,401)$^\dagger$ & \textbf{1,905} & 4,266 & 3,875 & \textbf{3,812} & 8,535 & 7,872 & 507 & 915 & \textbf{432}\\
 & (2,240, 6,601) & \textbf{2,140} & 10,294 & 5,888 & \textbf{4,282} & 20,591 & 11,934 & 635 & 2,529 & \textbf{591}\\
 & (2,980, 8,801) & \textbf{2,311} & 7,768 & 5,348 & \textbf{4,624} & 15,537 & 10,854 & \textbf{570} & 1,494 & 661\\
 & (4,460, 13,201) & \textbf{2,874} & 25,798 & 6,121 & \textbf{5,749} & 51,599 & 12,391 & \textbf{858} & 5,978 & 895\\
 & (7,420, 22,001) & \textbf{3,833} & 12,103 & 6,167 & \textbf{7,668} & 24,208 & 12,495 & 1,164 & 3,034 & \textbf{975}\\
 & (11,860, 35,201) & \textbf{6,308} & 13,499 & 8,588 & \textbf{12,618} & 27,000 & 17,356 & 1,920 & 3,209 & \textbf{1,505}\\
 & (14,820, 44,001) & \textbf{7,832} & -- & 11,581 & \textbf{15,666} & -- & 23,370 & 2,239 & -- & \textbf{1,834}\\
\addlinespace[2pt]
$L_\infty$ regression & (120, 9) & \textbf{190} & 465 & 702 & \textbf{383} & 932 & 1,466 & \textbf{28} & 42 & 69\\
 & (160, 9) & \textbf{169} & 554 & 907 & \textbf{340} & 1,110 & 1,876 & \textbf{25} & 51 & 81\\
 & (200, 9) & \textbf{127} & 533 & 740 & \textbf{256} & 1,068 & 1,544 & \textbf{20} & 49 & 72\\
 & (240, 9)$^\dagger$ & \textbf{150} & 586 & 834 & \textbf{303} & 1,175 & 1,734 & \textbf{24} & 54 & 82\\
 & (480, 9) & \textbf{270} & 1,060 & 786 & \textbf{543} & 2,121 & 1,637 & \textbf{44} & 113 & 72\\
 & (1,200, 9) & \textbf{236} & 673 & 760 & \textbf{475} & 1,348 & 1,583 & \textbf{41} & 78 & 69\\
 & (1,440, 9) & \textbf{308} & 741 & 764 & \textbf{619} & 1,484 & 1,590 & \textbf{54} & 92 & 76\\
 & (1,680, 9) & \textbf{358} & 752 & 782 & \textbf{718} & 1,505 & 1,626 & \textbf{62} & 93 & 78\\
\addlinespace[2pt]
Random LP & (4,000, 8,000) & \textbf{264} & 424 & 706 & \textbf{531} & 850 & 1,468 & \textbf{87} & 109 & 221\\
 & (10,000, 20,000)$^\dagger$ & \textbf{284} & 440 & 664 & \textbf{570} & 883 & 1,384 & \textbf{110} & 140 & 408\\
 & (20,000, 40,000) & \textbf{270} & 416 & 650 & \textbf{541} & 834 & 1,359 & \textbf{125} & 164 & 773\\
 & (30,000, 60,000) & \textbf{280} & 421 & 663 & \textbf{561} & 844 & 1,384 & \textbf{151} & 199 & 1,175\\
 & (40,000, 80,000) & \textbf{264} & 429 & 666 & \textbf{530} & 860 & 1,388 & \textbf{169} & 244 & 1,612\\
 & (60,000, 120,000) & \textbf{304} & 426 & 774 & \textbf{609} & 854 & 1,609 & \textbf{243} & 310 & 2,443\\
 & (100,000, 200,000) & \textbf{314} & 455 & 711 & \textbf{629} & 912 & 1,481 & \textbf{363} & 490 & 5,283\\
 & (150,000, 300,000) & \textbf{290} & 428 & 702 & \textbf{582} & 859 & 1,468 & \textbf{450} & 626 & 7,202\\
\bottomrule
\end{tabular}
\par\smallskip
\parbox{\linewidth}{\scriptsize Ours: \textcolor{black}{GALLOP}; HT: GA-PDHG (HT). Bold: best per metric. $^\dagger$: largest training size. A dash denotes failure to reach $\epsilon_{\mathrm{tol}}=10^{-6}$ within the iteration limit.}
\end{table}
\FloatBarrier

\section{Implementation Details}
\label{app:implementation-details}

\subsection{Network architecture and action distributions}
\label{app:controller-policy}

The \textcolor{black}{GALLOP} policy uses separate tanh MLPs for the restart decision, history coefficients, relaxation and anchor coefficients, step size, and primal weight.
A separate network estimates the value function.
Each network has two hidden layers of width 64.
The restart head produces $p_t^e\in(0,1)$ through a sigmoid and samples
\begin{equation}
    e_t\sim\mathrm{Bernoulli}(p_t^e).
\end{equation}
Conditional on $o_t$, the continuous heads sample independent Gaussian latents with the means $\mu_t^{\alpha^x},\mu_t^{\alpha^y},\boldsymbol\mu_t^\beta,\mu_t^\zeta,\mu_t^\eta,\mu_t^\omega$ output by their respective networks; all but $\boldsymbol\mu_t^\beta\in\mathbb R^k$ are scalars.
Their standard deviations $\sigma_\alpha,\sigma_\beta,\sigma_\zeta,\sigma_\eta,\sigma_\omega>0$ are learned globally and do not depend on $o_t$.
Their logarithms are optimized with the corresponding actor and clipped to group-specific lower bounds and a common upper bound when the distributions are evaluated.
Appendix~\ref{app:experiment-implementation} gives the initialization values and bounds used for each family.
The primal and dual relaxation latents share $\sigma_\alpha$, and all history coordinates share $\sigma_\beta$.
We restrict the learned coefficients to $\alpha_t^x,\alpha_t^y>0$ to focus the policy search on extrapolation beyond the PDHG output, motivated by potential acceleration rather than damping through underrelaxation.
Exponential transformations enforce positive relaxation coefficients and a positive anchor multiplier,
\begin{align}
    u_t^{\alpha^x}&\sim\mathcal N\left(\mu_t^{\alpha^x},\sigma_\alpha^2\right),
    &\alpha_t^x&=\exp\left(u_t^{\alpha^x}\right),\\
    u_t^{\alpha^y}&\sim\mathcal N\left(\mu_t^{\alpha^y},\sigma_\alpha^2\right),
    &\alpha_t^y&=\exp\left(u_t^{\alpha^y}\right),\\
    u_t^\zeta&\sim\mathcal N\left(\mu_t^\zeta,\sigma_\zeta^2\right),
    &\widetilde\zeta_t&=\exp\left(u_t^\zeta\right).
\end{align}
The effective anchor coefficient $\zeta_t$ is computed by \Eqref{eq:ga-pdhg-anchor-parameterization}, with $\zeta_t=0$ when $z_\star=z_t$.
During training, the history coefficients are sampled from an unbounded Gaussian distribution,
\begin{equation}
    \boldsymbol\beta_t\sim\mathcal N\left(\boldsymbol\mu_t^\beta,\sigma_\beta^2I_k\right).
\end{equation}
At every iteration, including restarts, the step-size sample is transformed as
\begin{equation}
\label{eq:learned_eta}
    u_t^\eta\sim\mathcal N\left(\mu_t^\eta,\sigma_\eta^2\right),\qquad
    \eta_t=\frac{0.1+4.9\sig\left(u_t^\eta\right)}{\|A\|_2}.
\end{equation}
Here $\sig(u):=1/(1+e^{-u})$ is the sigmoid.
The denominator estimates the spectral norm of the matrix supplied to PDHG.
Sparse instances use a converged spectral-norm calculation during both training and evaluation; dense training uses 100 power iterations, while dense evaluation uses a converged calculation.
The initialization step uses $0.998/\|A\|_2$.
The learned step size can exceed $1/\|A\|_2$ and therefore does not enforce the fixed-step nonexpansiveness condition.
At every iteration, including non-restart transitions, the primal-weight sample sets an absolute target relative to $\omega_0$,
\begin{equation}
\label{eq:learned_omega}
    u_t^\omega\sim\mathcal N\left(\mu_t^\omega,\sigma_\omega^2\right),\qquad
    \omega_t=\omega_0\exp\left(L_\omega\tanh\left(u_t^\omega\right)\right).
\end{equation}
Here $L_\omega=2$ for auction and $L_\omega=4.6$ for the other five families.
This weight is applied in the next PDHG map regardless of $e_t$; it does not multiply the previous weight.
The rollout records the raw action
\[
    a_t=(e_t,u_t^{\alpha^x},u_t^{\alpha^y},\boldsymbol\beta_t,u_t^\zeta,u_t^\eta,u_t^\omega).
\]
Let $a_t^{\mathrm{act}}$ consist of $e_t$, $u_t^\eta$, $u_t^\omega$, and the acceleration groups enabled when $e_t=0$.
Marginalizing the unused draws gives the density of this effective action,
\begin{equation}
\label{eq:masked_density_gen}
\begin{aligned}
    \pi_{\theta,\mathrm{act}}\left(a_t^{\mathrm{act}}\mid o_t\right)
    ={}&(p_t^e)^{e_t}(1-p_t^e)^{1-e_t}
    \mathcal N\left(u_t^\eta;\mu_t^\eta,\sigma_\eta^2\right)
    \mathcal N\left(u_t^\omega;\mu_t^\omega,\sigma_\omega^2\right)\\
    &\times\left[\begin{aligned}
    &\mathcal N\left(u_t^{\alpha^x};\mu_t^{\alpha^x},\sigma_\alpha^2\right)
    \mathcal N\left(u_t^{\alpha^y};\mu_t^{\alpha^y},\sigma_\alpha^2\right)\\
    &\times\mathcal N\left(\boldsymbol\beta_t;\boldsymbol\mu_t^\beta,\sigma_\beta^2I_k\right)
    \mathcal N\left(u_t^\zeta;\mu_t^\zeta,\sigma_\zeta^2\right)
    \end{aligned}\right]^{1-e_t}.
\end{aligned}
\end{equation}
The step-size and primal-weight factors are active on every transition, and the acceleration factors only when $e_t=0$.
Inactive factors contribute zero to the log-density.
PPO evaluates the density on the raw latents, so the output transformations require no Jacobian correction in the likelihood ratio.

\paragraph{Deterministic inference.}
At inference, $e_t=\mathbf1[p_t^e\geq0.5]$, and every Gaussian latent is replaced by its mean before applying the same transformations.
The observation map and action transforms are unchanged, and the trained parameters remain fixed throughout the solve.
When $e_t=0$, the transformed coefficients form the accelerated input in \Eqref{eq:gen_acc_iter}; when $e_t=1$, the input is $z_t$ and the history is reset after the PDHG step.
The step size and primal weight are applied on both branches at every iteration.
The critic and Gaussian standard deviations are needed for training, but do not determine the inference actions.

\subsection{\textcolor{black}{GALLOP} observations}
\label{app:controller-observations}

For $t\geq1$, the observation $o_t$ is evaluated before $a_t$ using the carried weight $\omega_{t-1}$.
The initial weight $\omega_0$ is used for the initialization step that produces $z_1$.
It is $\|c\|_2/\|b\|_2$ in solver coordinates when both norms are positive, and $1$ otherwise.
The features below follow the convergence state, restart context, and local geometry described in Section~\ref{sec:learning-controller}.

\paragraph{Convergence state.}
The primal residual, dual residual, and duality-gap diagnostic in original LP coordinates are defined in \Eqref{eq:residuals}.
The sign cone is $\Lambda:=\prod_{i=1}^n\Lambda_i$, where $\Lambda_i=\mathbb R$ if both bounds are finite, $\mathbb R_{\geq0}$ if only $l_i$ is finite, $\mathbb R_{\leq0}$ if only $u_i$ is finite, and $\{0\}$ if neither is finite.
With elementwise $[a]^\pm:=\max\{\pm a,0\}$, the box minimum in $r_g\left(z\right)$ equals $l^\top[\bar c]^+-u^\top[\bar c]^-$, with infinite-bound terms omitted.
When $y\in Y$ and $r_d\left(z\right)=0$, $r_g\left(z\right)$ is the primal--dual objective gap. Otherwise it is a finite diagnostic used together with the feasibility residuals.
The corresponding log normalized metrics are
\begin{equation}
\label{eq:primal_residual}
    p(z):=\log\left(
    \frac{\max\left\{\|r_p(z)\|_\infty,\|[l-x]^+\|_\infty,\|[x-u]^+\|_\infty\right\}}
    {1+\max\left\{\|b\|_\infty,\|Ax\|_\infty\right\}}\right).
\end{equation}
\begin{equation}
\label{eq:dual_residual}
    d(z):=\log\left(
    \frac{\max\left\{\|r_d(z)\|_\infty,\|[-y^{\mathrm{in}}]^+\|_\infty\right\}}
    {1+\max\left\{\|c\|_\infty,\|A^\top y\|_\infty\right\}}\right).
\end{equation}
\begin{equation}
\label{eq:duality_gap}
    g(z):=\log\left(
    \frac{|r_g(z)|}
    {1+\max\left\{|c^\top x|,|c^\top x-r_g(z)|\right\}}\right).
\end{equation}
All three metrics are evaluated in the original LP coordinates, with log arguments floored at $10^{-20}$.
The bound and dual-sign violation terms vanish on $X\times Y$.
Thus these formulas also apply to the initial anchor $z_\star=0$, which may lie outside $X$.

For the controller, fixed-point progress uses the executed direction $z_t-\hat z_t$ in solver coordinates, measured in a fixed metric,
\begin{equation}
\label{eq:gen_fpr}
    M_t^{\mathrm{fp}}
    :=\|z_t-\hat z_t\|_{Q\left(1,0.998/\|A\|_2\right)}.
\end{equation}
The measurement metric uses the scaled $A$ when preconditioning is enabled.
Its spectral norm is computed by dense SVD for dense training batches and by a converged calculation for sparse batches and evaluation, independently of the dense training step-size estimate.
For $t\geq1$, it measures the step executed with $\left(\omega_{t-1},\eta_{t-1}\right)$.
The observation includes progress relative to the initial step, the restart baseline $M_\star$, and the preceding step,
\begin{equation}
    \log\frac{\max\{M_t^{\mathrm{fp}},10^{-20}\}}{\max\{M_1^{\mathrm{fp}},10^{-20}\}},\qquad
    \log\frac{\max\{M_t^{\mathrm{fp}},10^{-20}\}}{\max\{M_\star,10^{-20}\}},\qquad
    \log\frac{\max\{M_t^{\mathrm{fp}},10^{-20}\}}{\max\{M_{t-1}^{\mathrm{fp}},10^{-20}\}}.
\end{equation}
It also includes the one-step changes in the KKT metrics,
\begin{equation}
    p(z_t)-p(z_{t-1}),\qquad d(z_t)-d(z_{t-1}),\qquad g(z_t)-g(z_{t-1}).
\end{equation}
The one-step features are evaluated only for $t\geq2$ and are set to zero at $t=1$.
The initial restart baseline is $M_\star=M_1^{\mathrm{fp}}$.

\paragraph{Restart context.}
The policy observes the log restart age and global iteration count, $\log(s+1)$ and $\log t$, and the KKT progress from the anchor,
\begin{equation}
    p(z_t)-p(z_\star),\qquad d(z_t)-d(z_\star),\qquad g(z_t)-g(z_\star).
\end{equation}
At initialization, the anchor metrics are evaluated at $z_\star=0$.
After a restart step, $z_\star\leftarrow z_{t+1}$ and $M_\star\leftarrow M_{t+1}^{\mathrm{fp}}$, so the anchor-relative KKT differences are zero.
Primal--dual balance, the relative weight, and its one-step change are measured by
\begin{equation}
\label{eq:gen_weight_observation}
    b_t^e:=\log\left(
    \frac{\omega_{t-1}\|x_t-x_\star\|_2+10^{-20}}
    {\|y_t-y_\star\|_2+10^{-20}}\right),
    \qquad \log\left(\frac{\omega_{t-1}}{\omega_0}\right),\qquad\log\left(\frac{\omega_{t-1}}{\omega_{t-2}}\right).
\end{equation}
The weight-change feature is set to zero at $t=1$.
The displacements in these features use the solver coordinates.
KKT residuals evaluated at $\hat z_t$ are not supplied to the policy.

\paragraph{Local geometry.}
Partition the history directions into primal and dual blocks,
\begin{equation}
    \mathbf K_t=\begin{pmatrix}\mathbf K_t^x\\\mathbf K_t^y\end{pmatrix},\qquad
    \mathbf K_t^x\in\mathbb R^{n\times k},\quad
    \mathbf K_t^y\in\mathbb R^{m\times k}.
\end{equation}
For $z=\left(x^\top,y^\top\right)^\top$, define the weighted norm
\begin{equation}
\label{eq:weight_norm}
    \lVert z\rVert_\omega^2:=\omega\lVert x\rVert_2^2+\omega^{-1}\lVert y\rVert_2^2.
\end{equation}
Using this norm and $\epsilon_{\mathrm{geom}}=10^{-8}$, the normalized history Gram matrix and its alignment with the current direction are
\begin{equation}
\label{eq:gen_history_geometry}
\begin{aligned}
    \Gamma_t&:=\frac{
        \omega_{t-1}(\mathbf K_t^x)^\top\mathbf K_t^x
        +\omega_{t-1}^{-1}(\mathbf K_t^y)^\top\mathbf K_t^y
    }{\|z_t-\hat z_t\|_{\omega_{t-1}}^2+\epsilon_{\mathrm{geom}}^2}
    \in\mathbb R^{k\times k},\\
    c_t^{\mathrm{hist}}&:=\frac{
        \omega_{t-1}(\mathbf K_t^x)^\top\left(x_t-\hat x_t\right)
        +\omega_{t-1}^{-1}(\mathbf K_t^y)^\top\left(y_t-\hat y_t\right)
    }{\|z_t-\hat z_t\|_{\omega_{t-1}}^2+\epsilon_{\mathrm{geom}}^2}
    \in\mathbb R^k.
\end{aligned}
\end{equation}
The anchor features measure its relative magnitude and weighted alignment with the current direction,
\begin{equation}
\label{eq:gen_anchor_geometry}
\begin{aligned}
    \rho_t^{\mathrm{anc}}&:=\log\frac{\|z_\star-z_t\|_{\omega_{t-1}}+\epsilon_{\mathrm{geom}}}
        {\|z_t-\hat z_t\|_{\omega_{t-1}}+\epsilon_{\mathrm{geom}}},\\
    c_t^{\mathrm{anc}}&:=\frac{\omega_{t-1}(x_\star-x_t)^\top\left(x_t-\hat x_t\right)+\omega_{t-1}^{-1}(y_\star-y_t)^\top\left(y_t-\hat y_t\right)}
        {\left(\|z_\star-z_t\|_{\omega_{t-1}}+\epsilon_{\mathrm{geom}}\right)
        \left(\|z_t-\hat z_t\|_{\omega_{t-1}}+\epsilon_{\mathrm{geom}}\right)}.
\end{aligned}
\end{equation}
The history quantities undergo an elementwise signed logarithm before entering the policy,
\begin{equation}
\label{eq:gen_geometry_features}
\begin{aligned}
    &\operatorname{vech}\left(\operatorname{sign}(\Gamma_t)\log\left(1+|\Gamma_t|\right)\right),\qquad
    \operatorname{sign}(c_t^{\mathrm{hist}})\log\left(1+|c_t^{\mathrm{hist}}|\right),\\
    &\rho_t^{\mathrm{anc}},\qquad c_t^{\mathrm{anc}}.
\end{aligned}
\end{equation}
Here $\operatorname{vech}$ stacks the upper-triangular entries, including the diagonal, in row-major order. The sign, absolute value, logarithm, and their products act elementwise.
The observation has $19+k+k(k+1)/2$ entries, giving 33 inputs for $k=4$.
These inputs are passed directly to the networks without running normalization or input clipping.

\subsection{Policy optimization}
\label{app:controller-optimization}

At each PPO iteration, the parameters $(\theta_{\mathrm{old}},\phi_{\mathrm{old}})$ are frozen while one episode is collected per LP instance.
For an episode of $T$ transitions, generalized advantage estimation~\citep{schulman2016gae} uses discount factor $\gamma=1$ and GAE coefficient $\kappa=0.99$,
\begin{equation}
\label{eq:gae}
\begin{aligned}
\delta_t &:= r_t+\gamma\left(1-\mathrm{success}_t\right)V^{\phi_{\mathrm{old}}}(o_{t+1})-V^{\phi_{\mathrm{old}}}(o_t),\\
\hat \Psi_t &:= \sum_{j=0}^{T-t}(\gamma\kappa)^j\,\delta_{t+j},\qquad 1\leq t\leq T.
\end{aligned}
\end{equation}
Here $\mathrm{success}_t:=\mathbf1\left[M^{\mathrm{rel}}(z_{t+1})\leq\log\epsilon_{\mathrm{tol}}\right]$.
The sum ends with the episode. A successful episode has terminal value zero; truncation at $T_{\max}$ retains $V^{\phi_{\mathrm{old}}}\left(o_{T+1}\right)$ in the final temporal-difference residual.
The critic target and the advantage normalized over the full rollout buffer are
\begin{equation}
\label{eq:gae_targets}
\hat R_t := \hat \Psi_t+V^{\phi_{\mathrm{old}}}(o_t),
\qquad
\tilde \Psi_t := \frac{\hat \Psi_t-\mathrm{mean}(\hat \Psi)}{\mathrm{std}(\hat \Psi) + 10^{-8}}.
\end{equation}
Here $\mathrm{mean}(\hat\Psi)$ and $\mathrm{std}(\hat\Psi)$ are the empirical mean and population standard deviation of all valid advantages in $\mathcal B$.
Writing $\hat{\mathbb E}_t$ for the empirical average over that buffer, the critic loss is
\begin{equation}
\label{eq:ppo_critic_loss}
\mathcal{L}_V(\phi)
:=\hat{\mathbb{E}}_t\!\left[
\left(V^\phi(o_t)-\hat R_t\right)^2
\right].
\end{equation}

\paragraph{Groupwise PPO.}
Let $\pi_{\theta,e}$ and $\pi_{\theta,\beta}$ denote the restart and history densities, $\pi_{\theta,\alpha\zeta}$ the product of the relaxation and anchor Gaussian densities, and $\pi_{\theta,\eta},\pi_{\theta,\omega}$ the step-size and primal-weight latent densities.
Each density below is evaluated at its recorded component of $a_t$, conditional on the recorded $o_t$; these arguments are suppressed.
For the action groups $\mathcal G=\{e,\beta,\alpha\zeta,\eta,\omega\}$, the masked log-densities are
\begin{equation}
\label{eq:group_densities}
    \boldsymbol\ell_t:=\left(\log\pi_{\theta,e},
    (1-e_t)\log\pi_{\theta,\beta},
    (1-e_t)\log\pi_{\theta,\alpha\zeta},
    \log\pi_{\theta,\eta},
    \log\pi_{\theta,\omega}\right)^\top.
\end{equation}
For $j\in\mathcal G$, let $\ell_t^j$ be the corresponding entry of $\boldsymbol\ell_t$ and $\ell_{t,\mathrm{old}}^j$ its value under $\theta_{\mathrm{old}}$.
Each group uses its own likelihood ratio and clipped surrogate, with clipping parameter $\varepsilon^{\mathrm{clip}}=0.2$,
\begin{equation}
\label{eq:ppo_group_objective}
\begin{aligned}
    \rho_t^j&:=\exp\left(\ell_t^j-\ell_{t,\mathrm{old}}^j\right),\\
    \mathcal J_j(\theta)&:=\hat{\mathbb E}_t\left[
    \min\left(\rho_t^j\tilde\Psi_t,
    \clip\left(\rho_t^j,1-\varepsilon^{\mathrm{clip}},1+\varepsilon^{\mathrm{clip}}\right)\tilde\Psi_t\right)\right].
\end{aligned}
\end{equation}
Here $\clip(v,a,b):=\min\{b,\max\{a,v\}\}$.
The five surrogates are summed; the advantages and recorded old log-densities are held fixed during optimization.

On restart rows, the acceleration groups have $\rho_t^j=1$ and contribute no policy gradient.
The restart, step-size, and primal-weight surrogates use all valid transitions in the common minibatches and their full-trajectory, globally normalized advantages $\tilde\Psi_t$.
The primal-weight actor is therefore also updated when the rollout contains no restarts.
Each actor and the critic has its own gradient clipping and Adam state.
The critic minimizes $c_V\mathcal L_V$, with $c_V$ specified in Appendix~\ref{app:experiment-implementation}.

\paragraph{Auxiliary restart supervision.}
Before the action is sampled, the conventional sufficient-progress, deterioration, and epoch-length criteria provide a binary label $y_t^{\mathrm{prior}}$.
The label is one if $M_t^{\mathrm{fp}}\leq0.2M_\star$, if both $M_t^{\mathrm{fp}}\leq0.6M_\star$ and $M_t^{\mathrm{fp}}>M_{t-1}^{\mathrm{fp}}$, or if $s\geq0.2t$.
At $t=1$, the preceding residual in this test is taken to equal $M_1^{\mathrm{fp}}$.
It is zero otherwise and is always set to zero when $s=0$.
These tests use the policy trajectory and do not execute a restart.
The auxiliary loss and the resulting actor loss are
\begin{equation}
\label{eq:restart_prior}
\begin{aligned}
    \mathcal L_{\mathrm{prior}}&:=-\hat{\mathbb E}_t\left[
    y_t^{\mathrm{prior}}\log p_t^e+(1-y_t^{\mathrm{prior}})\log(1-p_t^e)\right],\\
    \mathcal L_{\mathrm{actor}}&:=-\sum_{j\in\mathcal G}\mathcal J_j
    +\lambda_{\mathrm{prior}}\mathcal L_{\mathrm{prior}}.
\end{aligned}
\end{equation}
The coefficient $\lambda_{\mathrm{prior}}$ is fixed within each run; its family-specific value is given in Appendix~\ref{app:experiment-implementation}.
The binary cross-entropy term updates only the restart actor; its pre-action labels are held fixed during optimization.
The rollout stores the observations, raw actions, masked log-densities, rewards, success indicators, critic values, and pre-action labels,
\begin{equation}
\label{eq:gen_rollout_record}
    \tau=\left\{\left(o_t,a_t,\boldsymbol\ell_t,r_t,
    \mathrm{success}_t,V^\phi(o_t),y_t^{\mathrm{prior}}\right)\right\}_{t=1}^{T}.
\end{equation}

\subsection{Preconditioning}
\label{app:preconditioning}

We apply 10 iterations of Ruiz scaling~\citep{ruiz2001scaling}, followed by Pock--Chambolle scaling with exponent $1$~\citep{pock2011diagonal}.
Let $D_r\in\mathbb R^{m\times m}$ and $D_c\in\mathbb R^{n\times n}$ be the resulting positive diagonal row and column scalings, with $D_r=\begin{pmatrix}D_P&0\\0&D_G\end{pmatrix}$ partitioned by equality and inequality constraints.
The scaled problem data and their relation to the original variables are
\begin{equation}
\label{eq:scaled_problem_data}
\begin{aligned}
    A^{\mathrm{sc}} &= D_r A D_c,
    &\qquad b^{\mathrm{sc}} &= D_r b,
    &\qquad c^{\mathrm{sc}} &= D_c c, \\
    P^{\mathrm{sc}} &= D_P P D_c,
    & q^{\mathrm{sc}} &= D_P q, \\
    G^{\mathrm{sc}} &= D_G G D_c,
    & h^{\mathrm{sc}} &= D_G h, \\
    l^{\mathrm{sc}} &= D_c^{-1}l,
    & u^{\mathrm{sc}} &= D_c^{-1}u.
\end{aligned}
\end{equation}
\begin{equation}
\label{eq:scaled_variables}
    x = D_c x^{\mathrm{sc}}, \qquad
    y = D_r y^{\mathrm{sc}}.
\end{equation}
In this subsection, superscript $\mathrm{sc}$ explicitly identifies scaled quantities, and unsuperscripted data and $z=\left(x^\top,y^\top\right)^\top$ refer to the original LP.
The solver notation elsewhere omits $\mathrm{sc}$; calls to $p,d,g$, and $M^{\mathrm{rel}}$ include the coordinate conversion in \Eqref{eq:scaled_variables}.
PDHG operates in scaled coordinates, while convergence metrics, rewards, and termination are evaluated for the original LP.
The residual transformations and gap invariance are
\begin{equation}
\label{eq:scaled_residual_transform}
    r_p^{\mathrm{sc}} = D_r r_p, \qquad
    r_d^{\mathrm{sc}} = D_c r_d,
\end{equation}
\begin{equation}
\label{eq:scaled_gap_invariant}
    r_g^{\mathrm{sc}} = r_g.
\end{equation}
As in \Eqref{eq:residuals}, infinite-bound terms are omitted from the computed gap.
Original residual norms can be recovered from scaled residuals,
\begin{equation}
\label{eq:original_residual_from_scaled}
\begin{aligned}
    \lVert r_p\rVert_\infty
    &= \lVert D_r^{-1} r_p^{\mathrm{sc}}\rVert_\infty
    = \max_i
    \frac{\left|(r_p^{\mathrm{sc}})_i\right|}{(D_r)_{ii}}, \\
    \lVert r_d\rVert_\infty
    &= \lVert D_c^{-1} r_d^{\mathrm{sc}}\rVert_\infty
    = \max_j
    \frac{\left|(r_d^{\mathrm{sc}})_j\right|}{(D_c)_{jj}}.
\end{aligned}
\end{equation}
The normalized primal and dual metrics, including bound and dual-sign violations, are therefore
\begin{equation}
\label{eq:preconditioned_primal_residual}
    p(z)=\log\left(
    \frac{\max\left\{
    \|D_r^{-1}r_p^{\mathrm{sc}}\|_\infty,
    \|D_c[l^{\mathrm{sc}}-x^{\mathrm{sc}}]^+\|_\infty,
    \|D_c[x^{\mathrm{sc}}-u^{\mathrm{sc}}]^+\|_\infty
    \right\}}
    {1+\max\left\{\|b\|_\infty,\|D_r^{-1}A^{\mathrm{sc}}x^{\mathrm{sc}}\|_\infty\right\}}
    \right).
\end{equation}
\begin{equation}
\label{eq:preconditioned_dual_residual}
    d(z)=\log\left(
    \frac{\max\left\{
    \|D_c^{-1}r_d^{\mathrm{sc}}\|_\infty,
    \|D_G[-y^{\mathrm{sc},\mathrm{in}}]^+\|_\infty
    \right\}}
    {1+\max\left\{\|c\|_\infty,\|D_c^{-1}(A^{\mathrm{sc}})^\top y^{\mathrm{sc}}\|_\infty\right\}}
    \right).
\end{equation}
The primal objective is also invariant, $(c^{\mathrm{sc}})^\top x^{\mathrm{sc}}=c^\top x$, so the gap metric is
\begin{equation}
\label{eq:preconditioned_gap}
    g(z)=\log\left(
    \frac{|r_g^{\mathrm{sc}}(z^{\mathrm{sc}})|}
    {1+\max\left\{|(c^{\mathrm{sc}})^\top x^{\mathrm{sc}}|,|(c^{\mathrm{sc}})^\top x^{\mathrm{sc}}-r_g^{\mathrm{sc}}(z^{\mathrm{sc}})|\right\}}
    \right).
\end{equation}
These are evaluation formulas for the original metrics, with the same $10^{-20}$ log floor as in Appendix~\ref{app:controller-observations}.
They use only scaled residuals, cached products, and diagonal scalings, requiring no additional multiplication by $A$ or $A^\top$.
The merit $M^{\mathrm{rel}}$, tolerance test, and reward in \Eqref{eq:reward} use these original-coordinate metrics.
Preconditioned and unpreconditioned runs are thus evaluated at the same original-LP tolerance.

\subsection{Matrix--vector product caching}
\label{app:matrix-vector-caching}

For the primal and dual blocks of the vectors used in \Eqref{eq:gen_acc_iter}, we retain
\[
    (Ax_t,\;A\hat x_t,\;Ax_\star,\;A\mathbf K_t^x)
    \qquad\text{and}\qquad
    (A^\top y_t,\;A^\top\hat y_t,\;A^\top y_\star,\;A^\top\mathbf K_t^y).
\]
Applying the acceleration coefficients to these products reconstructs $A\hat x_{t+1}$ and $A^\top\hat y_{t+1}$ without another application of either matrix.
The newest history products are $Ax_{t+1}-Ax_t$ and $A^\top y_{t+1}-A^\top y_t$. The remaining columns are shifted.
At a restart, the products for the new anchor are copied from those of $z_{t+1}$, and the history matrices and their products are reset together.

For $t\geq1$, define the PDHG direction $q_t:=z_t-\hat z_t$, with primal and dual blocks $q_t^x:=x_t-\hat x_t$ and $q_t^y:=y_t-\hat y_t$.
The implementation applies the matrices to the blocks of $q_{t+1}$.
The only new sparse products are $Aq_{t+1}^x$ for the dual update and $A^\top q_{t+1}^y$ for the residual features.
The output products are recovered as $Ax_{t+1}=A\hat x_{t+1}+Aq_{t+1}^x$ and $A^\top y_{t+1}=A^\top\hat y_{t+1}+A^\top q_{t+1}^y$.
The primal update uses the reconstructed $A^\top\hat y_{t+1}$.
For the observation, we also cache the unweighted Gram matrices $(\mathbf K_t^x)^\top\mathbf K_t^x$ and $(\mathbf K_t^y)^\top\mathbf K_t^y$.
Only their newest row and column require fresh inner products when a history direction is inserted. The remaining submatrices are shifted, and both Gram matrices are zeroed at restarts.
GA-PDHG thus uses two new sparse matrix--vector products per iteration, with $O\left(\left(m+n\right)k+k^2\right)$ additional work for acceleration and cached geometry updates.
\textcolor{black}{GALLOP} additionally evaluates the policy at each iteration.

\subsection{Full \textcolor{black}{GALLOP} algorithm}
\label{app:full-algorithm}

Algorithm~\ref{alg:gen-pdhg-rollout} collects one training episode of GA-PDHG under policy $\pi_\theta$ on an LP instance $I$, storing its $T$ transitions in $\tau$.
The critic $V^\phi$ supplies value estimates for training.
Algorithm~\ref{alg:ppo-gen-pdhg-training} trains the \textcolor{black}{GALLOP} policy, with $\mathcal B$ denoting the buffer of valid transitions from all episodes in a PPO iteration.
Feature definitions and policy losses are provided in Appendix~\ref{app:controller-observations} and Appendix~\ref{app:controller-optimization}, respectively.
Initialization takes one unaccelerated PDHG step before the first controlled iteration.
The history stores only output differences inserted after initialization or the latest restart, with unavailable columns set to zero.
In the history update, $\mathbf K_t(:,1\!:\!k-1)$ denotes its first $k-1$ columns and is empty for $k=1$.
The returned $V_{T+1}^\phi$ is the terminal value or truncation bootstrap used by GAE.
The cap $T_{\max}$ counts controlled iterations and excludes the initial unaccelerated PDHG step.
We assume $\|A\|_2>0$ for the step-size formulas and require $\epsilon_{\mathrm{tol}}\geq10^{-20}$ to respect the numerical floor in the log residuals.

\begin{algorithm}[!htbp]
\caption{\textcolor{black}{GALLOP} training rollout}
\label{alg:gen-pdhg-rollout}
\footnotesize
\begin{algorithmic}[1]
\Require Positive integer cap $T_{\max}$, memory $k\geq1$, tolerance $\epsilon_{\mathrm{tol}}$, and controls and features specified in Appendix~\ref{app:controller-policy} and Appendix~\ref{app:controller-observations}
\Function{\textcolor{black}{GALLOP-Rollout}}{$\pi_\theta,V^\phi,I$}
\State Initialize $z_\star=\hat z_1=z_0=0$, weight $\omega_0$, $s\gets1$, $t\gets1$, $\tau\gets\emptyset$
\State $z_1\gets\mathcal H\left(\hat z_1;\omega_0,\eta_0\right)$, $\mathbf K_1\gets0$
\State $M_\star\gets M_1^{\mathrm{fp}}$ by \Eqref{eq:gen_fpr}; initialize observation baselines as in Appendix~\ref{app:controller-observations}
\State Form $o_1$ using Appendix~\ref{app:controller-observations}
\While{$t\leq T_{\max}$}
\State Evaluate $y_t^{\mathrm{prior}}$ and $V^\phi(o_t)$; sample $a_t$ using Appendix~\ref{app:controller-policy}
\State Record $\boldsymbol\ell_t$ by \Eqref{eq:group_densities}; set $\eta_t,\omega_t$ by \Eqref{eq:learned_eta} and \Eqref{eq:learned_omega}
\If{$e_t=0$}
\State Form $\hat z_{t+1}$ by \Eqref{eq:gen_acc_iter}, using the transformed acceleration coefficients
\Else
\State $\hat z_{t+1}\gets z_t$
\EndIf
\State $z_{t+1}\gets\mathcal H(\hat z_{t+1};\omega_t,\eta_t)$; measure $M_{t+1}^{\mathrm{fp}}$ by \Eqref{eq:gen_fpr}
\If{$e_t=1$}
\State $z_\star\gets z_{t+1}$, $M_\star\gets M_{t+1}^{\mathrm{fp}}$, $\mathbf K_{t+1}\gets0$, $s\gets0$
\Else
\State $\mathbf K_{t+1}\gets\left[z_{t+1}-z_t\quad\mathbf K_t(:,1\!:\!k-1)\right]$, $s\gets s+1$
\EndIf
\State Evaluate the KKT metrics and $r_t$ by \Eqref{eq:reward}
\State $\mathrm{success}_t\gets\mathbf1\left[M^{\mathrm{rel}}(z_{t+1})\leq\log\epsilon_{\mathrm{tol}}\right]$
\State Append the transition to $\tau$ as in \Eqref{eq:gen_rollout_record}
\State $t\gets t+1$; form $o_t$ using Appendix~\ref{app:controller-observations}
\If{$\mathrm{success}_{t-1}=1$}
\State \textbf{break}
\EndIf
\EndWhile
\State $T\gets t-1$
\If{$\mathrm{success}_T=1$}
\State $V_{T+1}^\phi\gets0$
\Else
\State $V_{T+1}^\phi\gets V^\phi\left(o_{T+1}\right)$ \Comment{bootstrap at the episode cap}
\EndIf
\State \Return $\tau,V_{T+1}^\phi$
\EndFunction
\end{algorithmic}
\end{algorithm}

\begin{algorithm}[!htbp]
\caption{PPO training of \textcolor{black}{GALLOP}}
\label{alg:ppo-gen-pdhg-training}
\footnotesize
\begin{algorithmic}[1]
\Require LP dataset $\mathcal D=\{I^{(i)}\}_{i=1}^N$, number of PPO iterations $J$, history length $k$,
\Statex \hspace{\algorithmicindent} control and standard-deviation bounds, $\gamma,\kappa,\varepsilon^{\mathrm{clip}},\lambda_{\mathrm{prior}},c_V,\chi,\epsilon_{\mathrm{tol}},T_{\max},\epsilon_{\mathrm{geom}}$
\State Initialize five actors, global log standard deviations, critic $V^\phi$, and their optimizers
\For{$j=1,\ldots,J$}
\State $(\theta_{\mathrm{old}},\phi_{\mathrm{old}})\gets(\theta,\phi)$, $\mathcal B\gets\emptyset$
\For{$i=1,\ldots,N$ in batches}
\State $\tau^{(i)},V_{T^{(i)}+1}^{\phi_{\mathrm{old}}}\gets\textsc{\textcolor{black}{GALLOP-Rollout}}(\pi_{\theta_{\mathrm{old}}},V^{\phi_{\mathrm{old}}},I^{(i)})$
\State Compute $\hat\Psi_t^{(i)}$ by \Eqref{eq:gae}, using $V_{T^{(i)}+1}^{\phi_{\mathrm{old}}}$ for the final bootstrap
\State Append valid rollout records and their advantages to $\mathcal B$
\EndFor
\State Compute $\tilde\Psi_t$ and $\hat R_t$ over the full buffer by \Eqref{eq:gae_targets}
\For{several epochs of minibatches from $\mathcal B$}
\State Update all five actors on $\mathcal L_{\mathrm{actor}}$ in \Eqref{eq:restart_prior}
\State Update the critic on $c_V\mathcal L_V$ in \Eqref{eq:ppo_critic_loss}
\EndFor
\EndFor
\State \Return $\theta,\phi$
\end{algorithmic}
\end{algorithm}
\FloatBarrier

\section{Details on Experiments}
\label{app:experiment-details}

\subsection{Implementation details}
\label{app:experiment-implementation}

\paragraph{Hardware and numerical precision.}
All experiments use an NVIDIA GeForce RTX 4090 GPU with 24 GB of memory and an Intel Core i9-13900K CPU.
The implementation uses JAX 0.10.2 with CUDA 12 and Flax 0.12.8; the external baseline uses MPAX 0.2.4.
The policy networks compute in float32, while the PDHG state, residuals, rewards, and advantage estimates use float64.

\paragraph{Optimization settings.}
Table~\ref{tab:training-hyperparameters} lists the settings of the runs that produced the selected checkpoints.
All networks use Adam with a constant learning rate, four PPO epochs per rollout buffer, and a separate gradient-norm clip of $0.5$ per network.
All families use $k=4$, $\gamma=1$, $\kappa=0.99$, PPO clipping $\varepsilon^{\mathrm{clip}}=0.2$, and no entropy bonus.

\begin{table}[!htbp]
\centering
\small
\setlength{\tabcolsep}{6pt}
\caption{Settings of the training runs used in the six-family comparison.}
\label{tab:training-hyperparameters}
\begin{tabular}{@{}lrrrrr@{}}
\toprule
Family & Learning rate & PPO minibatch & $c_V$ & $\lambda_{\mathrm{prior}}$ & $L_\omega$\\
\midrule
Transport & $5\times10^{-4}$ & 16,384 & 0.5 & 0.02 & 4.6\\
Auction & $5\times10^{-4}$ & 4,096 & 0.5 & 0 & 2.0\\
Min-cost flow & $10^{-4}$ & 16,384 & 0.5 & 0.02 & 4.6\\
Robust inventory & $10^{-4}$ & 16,384 & 0.5 & 0.05 & 4.6\\
$L_\infty$ regression & $5\times10^{-4}$ & 16,384 & 0.2 & 0.01 & 4.6\\
Random LP & $10^{-4}$ & 16,384 & 0.5 & 0.02 & 4.6\\
\bottomrule
\end{tabular}
\end{table}

\paragraph{Exploration standard deviations.}
Each continuous control group has a learned global log standard deviation, shared across observations and updated with its actor.
For $j\in\{\alpha,\beta,\zeta,\eta,\omega\}$, let $\ell_j$ denote this learned scalar. The distribution uses
\begin{equation}
    \sigma_j=\exp\left(\clip\left(\ell_j,\log\sigma_j^{\min},\log1.5\right)\right).
\end{equation}
Table~\ref{tab:exploration-sigmas} gives the initialization and lower bounds from the selected runs.
The two relaxation latents share $\sigma_\alpha$, and all $k$ history coordinates share $\sigma_\beta$.
Inference replaces the Gaussian draws by their means.

\begin{table}[!htbp]
\centering
\footnotesize
\setlength{\tabcolsep}{5pt}
\caption{Gaussian exploration settings. Both tuples use the order $\left(\alpha,\beta,\zeta,\eta,\omega\right)$; the common upper bound is $1.5$.}
\label{tab:exploration-sigmas}
\begin{tabular}{@{}lll@{}}
\toprule
Family & Initial standard deviations & Minimum standard deviations\\
\midrule
Transport & $\left(0.3,0.2,0.4,0.4,0.2\right)$ & $\left(0.2,0.1,0.2,0.2,0.1\right)$\\
Auction & $\left(0.4,0.2,0.4,0.6,0.2\right)$ & $\left(0.001,0.001,0.001,0.12,0.001\right)$\\
Min-cost flow & $\left(0.2,0.3,0.3,0.4,0.2\right)$ & $\left(0.05,0.2,0.05,0.05,0.05\right)$\\
Robust inventory & $\left(0.05,0.05,0.07,0.1,0.07\right)$ & $\left(0.01,0.01,0.01,0.02,0.02\right)$\\
$L_\infty$ regression & $\left(0.4,0.2,0.4,0.5,0.2\right)$ & $\left(0.2,0.1,0.05,0.3,0.1\right)$\\
Random LP & $\left(0.1,0.1,0.1,0.1,0.1\right)$ & $\left(0.02,0.01,0.02,0.02,0.01\right)$\\
\bottomrule
\end{tabular}
\end{table}

\paragraph{Batches across problem sizes.}
LPs with the same dimensions are grouped into a size bucket and solved in parallel within that bucket.
Each PPO iteration collects one episode per training LP from every bucket before updating the policy.
The valid transitions are then combined, their advantages are normalized over the full buffer, and shuffled PPO minibatches are formed using the sizes in Table~\ref{tab:training-hyperparameters}.
Thus rollout batches have fixed LP dimensions, while PPO minibatches can mix observations from different problem sizes because the observation and action dimensions depend only on $k$.
We evaluate the same trained policy separately on each held-out and test bucket, including sizes larger than the training sizes listed in Appendix~\ref{app:protocol}.
\FloatBarrier

\end{document}